\theoremstyle{plain} \textheight 22.5truecm \textwidth 14.5truecm
\newcommand{\R}{\mathbb{R}}             
\def\<{\langle}
\def\>{\rangle}
\theoremstyle{plain}
\newtheorem{theorem}{Theorem}[section]
\newtheorem{lemma}[theorem]{Lemma}
\newtheorem{corollary}[theorem]{Corollary}
\newtheorem{proposition}[theorem]{Proposition}
\theoremstyle{definition}
\newtheorem*{definition*}{Definition}
\newtheorem{example}[theorem]{Example}
\theoremstyle{remark}
\newtheorem{remark}[theorem]{Remark}
\begin{document}
\title{Hypercyclicity of operators that $\lambda$-commute with the Hardy backward shift}

\author{ Mohamed Amouch, Fernando León-Saavedra,  and M.P. Romero de la Rosa }

\address{\textsc{Mohamed Amouch },
University Chouaib Doukkali.
Department of Mathematics, Faculty of science
Eljadida, Morocco}

\address{\textsc{Fernando León-Saavedra}. Universidad de Cádiz, Departamento de Matemáticas, Spain}

\address{\textsc{M.P Romero de la Rosa}.  Universidad de Cádiz, Departamento de Matemáticas, Spain}
\email{amouch.m@ucd.ac.ma}
\email{fernando.leon@uca.es}
\email{pilar.romero@uca.es}

\subjclass[2010]{Primary 47A16, 37B20; Secondary 46E50, 46T25}
\keywords{Space of entire functions, Shift operator, Extended eigenoperators, Hypercyclic operators. }

\begin{abstract}

An operator $T$ acting on a separable complex Hilbert space $H$ is said to
be hypercyclic if there exists $f\in H$ such that the orbit $\{T^n
f:\ n\in \mathbb{N}\}$ is dense in $H$. Godefroy and Shapiro \cite{GoSha} characterized those elements in the commutant of the Hardy backward shift  which are hypercyclic.
In this paper we study some dynamics properties of operators $X$ that $\lambda$-commute with the Hardy backward shift $B$, that is, $BX=\lambda XB$. 
\end{abstract}
\maketitle

\section{Introduction}

A   bounded linear operator $T$, defined on a complex separable Banach space $X$, is said to be hypercyclic if there exists a vector $x\in X$ such that
the orbit: $\mathcal{O}(x,T):=\{T^n x:\
n\in \mathbb{N}\}$ is dense in $X$.
In this case, the vector $x$ is called a hypercyclic vector for $T$.

The main ancestors of this paper are \cite{onorbofele} and \cite{GoSha}.
In 1969 Rolewicz \cite{onorbofele} show up the first example hypercyclic operator defined on a Banach space. If we considered  $T=\lambda B$ where $B$ is the backward shift operator defined on the sequences spaces $\ell_p$, Rolewicz proved that $T=\lambda B$ is hypercyclic if and only if $|\lambda|>1$. Rolewicz’s findings were united in 1991 by G.
Godefroy and J. H. Shapiro (see \cite{GoSha}), who demonstrated that each non-scalar operator that
commutes with $B$ defined on the Hardy space $H^2(\mathbb{D})$ is hypercyclic if and only if the interior of its punctual spectrum intersects the
unit circle.  Godefroy and Shapiro's seminal work points to the idea that the hypercyclic properties of a given operators are somehow transferred to the commutator of the operator. Suprisingly, for the Bergman backward shift the commutant hypercyclicity problem is much more delicate (see \cite{bousha}).

On the other hand, we found in the literature the $\lambda$-commutant notion, that is, two operators are $\lambda$-commuting if they commute up to a complex factor $\lambda$. The term $\lambda$-commuting was introduce by Conway and Prajitura (\cite{onlamcomope}), and since this relation is not symmetric, a more precise terminology was introduced: A complex number $\lambda$ is said to be an {\it extended eigenvalue} of an operator $T$ if there exists an operator $X\neq 0$ (later called {\it extended $\lambda$-eigenoperator} of $T$) such that $TX=\lambda XT$.

Let us remark that although we have defined a hypercyclic operator on a Banach space, this property is not exclusive por Banach space theorists. In fact the first examples of hypercyclic operator were discovered a century ago defined on the  space of entire functions endowed with the compact-open topology (see \cite{lincha} for more historical notes).
Recent research is focused on to try to see how the hypercyclic properties are transferred to the {\it $\lambda$-commutant} of a given operator. For instance, some dynamics properties of the extended eigenoperators  of the differentiation operator defined on the space of entire functions was studied in \cite{BGSR,GSR,note}.  In this note, we wish to explore the dynamic properties of extended eigenoperators of the backward shift operator defined on the Hardy space $H^2(\mathbb{D})$. 
We see that in Banach spaces, the existence of the uniform norm of operators makes it difficult for hypercyclic properties to be transferred to the $\lambda$-commutant. In strong contrast, in Fréchet spaces this transfer seems to be much easier.

Let us recall that a Banach space operator $T$ is called supercyclic if there exists a vector $x\in X$ such that the set of scalar
multiples of $\mathcal{O}(x,T)$:
$$\mathbb{C}.\mathcal{O}(x,T):=\{\mu T^nx:\ \mu\in \mathbb{C},\ n\in \mathbb{N}\}$$ is dense in $X$. And let us recall that  a  linear operator $\mathcal{B}$ on a Banach space $X$ is said to be  a generalized backward shift if it satisfies the following conditions:
\begin{enumerate}
  \item The kernel of $\mathcal{B}$ is one dimensional.
\item $\bigcup\{\ker \mathcal{B}^n:\ n = 0, 1, 2,...\}$ is dense in $X$.
\end{enumerate}

We will see that  other dynamics properties, such as supercyclicity, that  is transferred very well to the  commutant, it founds some serious dificulties to be transferred to the $\lambda$-commutant. For instance,
V. M\"uller (\cite{muller}), solving a longstanding question posed by Godefroy and Shapiro in \cite{GoSha}, proved that any non-scalar operator commuting with a generalized backward shift is supercyclic. 
However this results is not longer true for the $\lambda$-commutant of the backward shift operator defined on the Hardy space $H^2(\mathbb{D})$.

The paper is structured as follows. In Section \ref{tools} we introduce the main tools that we will use throughout the paper. We will use the Hypercyclicity Criterion, that is, a sufficient condition for hypercyclicity. Next, in order to find enough intuition to address our research, we will review the result of Godefroy and Shapiro that characterizes which elements of the commutant of $B$ are hypercyclic. Specifically, 
in Godefroy and Shapiro's result we will see a dichotomy on the orbits of operators $T$ that commute with the operator $B$: the operator $T$ is either hypercyclic or the orbits of $T$ or $T^{-1}$ are bounded. In general, something similar seems to happen in the case of operators that $\lambda$-commutes with $B$.


The cornerstone in Section \ref{factorization} is a factorization of the extended $\lambda$-eigenoperators of the backward shift. This factorization complements the results obtained by S. Petrovic in \cite{Pet} and it is a main tool that will be used in the next sections.
When $|\lambda|=1$  using the ideas of \cite{LaLePeZa} we get that each extended $\lambda$-eigenoperator of the backward shift factorize as $R_\lambda\phi(B)$ when $R_\lambda f(z)=f(\lambda z)$ is the dilation operator and $\phi(B)$ is an element of the commutant of $B$, that is, the adjoint of a multiplier on $H^2(\mathbb{D})$. Surprisingly enough, when $|\lambda|<1$ the extended $\lambda$-eigenoperators have the form $R_\lambda\phi(B)$ when $\phi$ is an element of $H^2(\mathbb{D})$.

In Section \ref{hypercyclicity} we will study the Hypercyclicity of an extended $\lambda$-eigenoperator 
of the backward shift. 
Firstly, we show that if $R_\lambda \phi(B)$ is  hypercyclic then $\phi(B)$ must be hypercyclic. But the converse is not true. In strong contrast the 
with the results in \cite{BGSR}, the converse is not true,  a hypercyclic operator $\phi(B)$  in the commutant of $B$ in general it does not induce a hypercyclic extended  $\lambda$-eigenoperator. The problem depends on the instrinsic geometry of $\phi(\mathbb{D})$.
However, when $\lambda$ is a root of the unit we can get a full characterization in terms of the properties of $\phi$. 
When $\lambda$ is an irrational rotation the problem is connected to the study of the dynamical properties of a sequence of functions. 
A characterization of the hypercyclicity of $R_\lambda\phi(B)$ in terms of the geometry of $\phi(\mathbb{D})$, seems to be a challenging problem.

In Section \ref{supercyclicity} we will show that if $T=R_\lambda \phi(B)$ is an extended $\lambda$-eigenoperator of the backward shift operator with $|\lambda|<1$, then $T$ is supercyclic if and only if $\phi(0)=0$. As byproduct we obtain that M\"uller's result is not longer true for elements in the $\lambda$-commutant of the  backward shift operator. The paper is closed with a brief section with open questions and further directions.

\section{Some tools}

\label{tools}

We will use the following version of the hypercyclicity Criterion
formulated by J. Bès and A. Peris in \cite{herhypope}: 
\begin{theorem}
Let $T$ be an operator on an $F$-space $X$ satisfying the following
conditions: there exist $X_0$ and $Y_0$ dense subsets of $X$, a
sequence $(n_k)$ of non-negative integers, and (not necessarily
continuous) mappings $S_{n_k} : Y_0 \rightarrow X$ so that:
\begin{enumerate}
\item $T^{n_k} \rightarrow 0$ pointwise on $X_0$;
\item $S_{n_k} \rightarrow 0$ pointwise on $Y_0$;
\item $T^{n_k}S_{n_k} \rightarrow Id_{Y_0}$ pointwise on $Y_0$.
\end{enumerate}
Then the operator $T$ is hypercyclic.
\end{theorem}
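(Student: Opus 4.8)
The plan is to deduce hypercyclicity from the Birkhoff transitivity theorem. Since $X$ is a separable completely metrizable topological vector space, it is a separable Baire space, and (apart from the trivial case $X=\{0\}$, which is vacuously fine) it has no isolated points because translations are homeomorphisms. Hence it suffices to prove that $T$ is topologically transitive, i.e. that for every pair of nonempty open sets $U,V\subseteq X$ there exists $n$ with $T^{n}(U)\cap V\neq\emptyset$; from this the set of hypercyclic vectors for $T$ is a dense $G_\delta$, in particular nonempty.

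To verify transitivity, I would fix nonempty open sets $U,V\subseteq X$ and use the density of $X_0$ and $Y_0$ to choose $x\in U\cap X_0$ and $y\in V\cap Y_0$. For each $k$ set
\[
z_k \;=\; x + S_{n_k} y .
\]
First, $z_k\to x$ by condition (2), since $S_{n_k}y\to 0$. Next, applying $T^{n_k}$ and using linearity,
\[
T^{n_k} z_k \;=\; T^{n_k} x \;+\; T^{n_k} S_{n_k} y \;\longrightarrow\; 0 + y \;=\; y ,
\]
the first summand tending to $0$ by (1) and the second to $y$ by (3). Because $U$ and $V$ are open with $z_k\to x\in U$ and $T^{n_k}z_k\to y\in V$, there is a single $k$ large enough that simultaneously $z_k\in U$ and $T^{n_k}z_k\in V$; for that $k$ one has $T^{n_k}z_k\in T^{n_k}(U)\cap V$, so $T^{n_k}(U)\cap V\neq\emptyset$. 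This actually gives transitivity along the fixed sequence $(n_k)$.

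The only point requiring any care is that the maps $S_{n_k}$ are not assumed to be continuous, so one cannot argue that $S_{n_k}$ sends open sets to open sets; the perturbation argument above sidesteps this, as it only invokes the pointwise convergences in (1)–(3) evaluated at the two fixed vectors $x$ and $y$. With topological transitivity established, the conclusion follows from the Baire-category form of Birkhoff's theorem, and I expect no further obstacle; the proof is essentially a bookkeeping of the three hypotheses.
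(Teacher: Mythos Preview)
Your argument is correct and is exactly the standard textbook proof of the Hypercyclicity Criterion via Birkhoff's transitivity theorem. However, note that the paper does not actually supply a proof of this statement: it is quoted as a tool from B\`es and Peris \cite{herhypope} and simply stated without argument. So there is no ``paper's own proof'' to compare against; your write-up reproduces the well-known proof (as in, e.g., Grosse--Erdmann and Peris Manguillot, \emph{Linear Chaos}). One small remark: you assume separability of $X$, which is not spelled out in the statement as quoted, but it is of course implicit since the existence of a hypercyclic vector forces the space to be separable, and the paper works throughout on separable spaces.
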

Specifically we will use the following spectral sufficient condition discovered by Godefroy and Shapiro \cite{GoSha}.

\begin{theorem}[Godefroy-Shapiro]
\label{spectral}
Let $T$ be a bounded linear operator defined on a Banach space $X$. If  the eigenspaces $\text{ker}(T-\lambda I)$ with $|\lambda|>1$ span a dense subspace of $X$ as well as the eigenspaces $\text{ker}(T-\lambda I)$ with $|\lambda|<1$, then $T$ is hypercyclic. 
\end{theorem}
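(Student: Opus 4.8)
The plan is to verify the hypotheses of the Hypercyclicity Criterion (the Bès--Peris version stated above) with the full sequence $n_k=k$. Set
$$X_0=\lspan\set{\ker(T-\lambda I):\ |\lambda|<1},\qquad Y_0=\lspan\set{\ker(T-\mu I):\ |\mu|>1}.$$
By hypothesis both $X_0$ and $Y_0$ are dense in $X$, so they are admissible dense subsets for the criterion. The rest is a matter of checking conditions (1)--(3) on these subsets.

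For condition (1): a vector $x\in X_0$ is a finite sum $x=\sum_j x_j$ with $Tx_j=\lambda_j x_j$ and $|\lambda_j|<1$, so $T^nx=\sum_j\lambda_j^n x_j\to 0$ as $n\to\infty$, i.e.\ $T^n\to 0$ pointwise on $X_0$. For the remaining two conditions I would first build the backward maps. Since eigenvectors of $T$ associated with pairwise distinct eigenvalues are linearly independent, every $y\in Y_0$ has a \emph{unique} decomposition $y=\sum_j y_j$ with $Ty_j=\mu_j y_j$, $|\mu_j|>1$; hence the linear map $S\colon Y_0\to Y_0$ determined by $Sy_j=\mu_j^{-1}y_j$ is well defined, and $Y_0$ is $S$-invariant. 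Taking $S_{n}=S^{n}$, we get $S^ny=\sum_j\mu_j^{-n}y_j\to 0$ because $|\mu_j|>1$, which is condition (2); and from $TSy_j=\mu_j^{-1}Ty_j=y_j$ we obtain $T^nS^n=\mathrm{Id}$ on $Y_0$ for every $n$, which is condition (3) (indeed with exact equality, not merely convergence). The Hypercyclicity Criterion then yields that $T$ is hypercyclic.

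There is essentially no serious obstacle in this argument; the only point deserving explicit mention is the well-definedness of $S$ on $Y_0$, which rests on the linear independence of eigenvectors belonging to distinct eigenvalues. One could equally phrase the whole proof through the Godefroy--Shapiro ``three open sets'' picture, but routing it through the Criterion as above keeps it self-contained given the tools already introduced.
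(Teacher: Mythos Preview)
Your proof is correct and is the standard argument: you feed the two eigenvector spans into the B\`es--Peris Hypercyclicity Criterion, using $S$ defined on eigenvectors by $y\mapsto \mu^{-1}y$. The well-definedness of $S$ is justified exactly as you say, since $Y_0$ is the algebraic direct sum of the eigenspaces $\ker(T-\mu I)$ for $|\mu|>1$.

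As for comparison with the paper: the paper does \emph{not} prove this theorem. Theorem~\ref{spectral} is stated as a known result of Godefroy and Shapiro \cite{GoSha} and is used as a tool throughout, with no proof given. Your argument is precisely the textbook derivation of the Godefroy--Shapiro criterion from the Hypercyclicity Criterion (see e.g.\ Grosse-Erdmann--Peris \cite{lincha}), so there is nothing to contrast.
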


Let us denote by $k_a(z)=\frac{1}{1-\overline{a}z}$ the reproducing kernels on $H^2(\mathbb{D})$ and $M_g$ denotes the analytic Toeplitz operator with symbol $g\in H^\infty(\mathbb{D})$. 
It is well known that $M_g^\star k_a(z)=\overline{g(a)} k_{a}(z)$.
As usual, we denote by $\overline{g}(z)=\overline{g(\overline{z})}\in H^\infty(\mathbb{D})$.
By convenience, we will denote the elements of the commutant of $B$, by $\phi(B)$, with $\phi\in H^\infty(\mathbb{D})$.

The spectrum of $M_g^\star$ is usually big, in fact $M_{\overline{\phi}}^\star k_a(z)=\phi(a) k_a(z)$. Thus, using Theorem \ref{spectral}, if $\phi(\mathbb{D})$ meet the unit circle then $M_{\overline{\phi}}^\star$ is hypercyclic. 
In other cases, that is, if $\phi(\mathbb{D})\subset \mathbb{D}$ (respectively $\phi(\mathbb{D})\subset \mathbb{C}\setminus\overline{\mathbb{D}}$) then the orbit  $\|(M_\phi^\star)^nf\|\leq M$ are bounded (respectively the orbit of $f$ under  $(M_\phi^\star)^{-1}$ is bounded). This dichotomy that appears in this result will reappear in the results of this work, and it will be central in the discussion.

Let $X$ be a topological space and $(\phi_k)$ a sequence of continuous mappings on $X$. A dynamical system $(X,(\phi_k))$ is topologically transitive if for any non-empty open sets $U,V\subset X$ there exists a positive integer $n$ such that $\phi_n(U)\cap V\neq \emptyset$. 

\begin{theorem}[Birkhoff's Transitivity theorem]
\label{birkhoff}
If $X$ is a second countable, complete metric space, then topological transitivity implies that there is a dense set of points $x$ in $X$ with dense orbit $\{\phi_k(x)\}_{k\geq 1}$.
\end{theorem}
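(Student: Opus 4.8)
The plan is to deduce the conclusion from the Baire Category Theorem applied to an explicit description of the set of points with dense orbit. Because $X$ is second countable, I would first fix a countable base $\{V_j\}_{j\geq 1}$ for its topology, and observe that a point $x\in X$ has dense orbit $\{\phi_k(x)\}_{k\geq 1}$ if and only if that orbit meets every $V_j$; in formulas,
$$\set{x\in X:\ \{\phi_k(x)\}_{k\geq1}\text{ is dense in }X}=\bigcap_{j\geq1}\ \bigcup_{k\geq1}\phi_k^{-1}(V_j).$$
This reformulation is the only place where second countability is used, and it is what makes the outer intersection countable.

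Next I would check that each set $G_j:=\bigcup_{k\geq1}\phi_k^{-1}(V_j)$ is open and dense in $X$. Openness is immediate: each $\phi_k$ is continuous, so each $\phi_k^{-1}(V_j)$ is open, and an arbitrary union of open sets is open. For density, take any non-empty open $U\subset X$; topological transitivity of $(X,(\phi_k))$ yields an index $k$ with $\phi_k(U)\cap V_j\neq\emptyset$, which is equivalent to $U\cap\phi_k^{-1}(V_j)\neq\emptyset$, hence $U\cap G_j\neq\emptyset$. Since $U$ was arbitrary, $G_j$ is dense.

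Finally, since $X$ is a complete metric space, it is a Baire space, so the countable intersection $\bigcap_{j\geq1}G_j$ of dense open sets is again dense (indeed a dense $G_\delta$). By the displayed identity this intersection is precisely the set of points with dense orbit, which proves the theorem. I do not expect any real obstacle here: the argument is the standard transitivity-to-residuality passage, and the only steps needing attention are the translation of ``dense orbit'' through the countable base (using second countability) and the appeal to Baire category (using completeness). One could add the hypothesis that $X$ has no isolated points if one wanted transitivity to be equivalent to the existence of a dense orbit, but that is not required for the stated one-directional implication.
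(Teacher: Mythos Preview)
Your argument is correct and is the standard Baire category proof of Birkhoff's transitivity theorem. The paper, however, offers no proof of this statement at all: Theorem~\ref{birkhoff} is simply quoted in Section~\ref{tools} as a known tool (alongside the Hypercyclicity Criterion, the Godefroy--Shapiro spectral criterion, and Montel's theorem) and is used later in the proofs of Theorem~\ref{main} and Theorem~\ref{main2}. So there is nothing to compare your approach against; your write-up would serve perfectly well as a self-contained justification of the cited result.
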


The following version of Montel's theorem  in the practice can seen as a type of Birkhoff's transitivity theorem.

\begin{theorem}[Montel's Theorem]
\label{montel}
Let us suppose that 
$\mathcal{F}$ is a family of meromorphic functions defined on an open subset $D$. If $z_0\in D$ is such that $\mathcal{F}$ is not normal at $z_0$ and $z_0\in U\subset D$, then  
$$ \bigcup_{f\in \mathcal{F} }   f(U)$$
is dense for any non-empty neighbourhood $U$ of $z_0$.
\end{theorem}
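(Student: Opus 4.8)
The plan is to obtain this version of Montel's theorem as the contrapositive of the \emph{Fundamental Normality Test} (the ``big'' form of Montel's theorem): a family of meromorphic functions on a domain each of which omits three fixed distinct values of the Riemann sphere $\widehat{\mathbb{C}}$ is normal on that domain. Granting that classical fact, the asserted statement is essentially immediate, the only work being a localization and a one-line contradiction.

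First I would reduce to the case in which $U$ is a disk. Given an arbitrary neighbourhood $U$ of $z_0$ in $D$, choose a disk $U_0=D(z_0,r)\subseteq U$; since $\bigcup_{f\in\mathcal{F}}f(U_0)\subseteq\bigcup_{f\in\mathcal{F}}f(U)$, density of the former forces density of the latter. So it is enough to treat the case where $U$ itself is a disk, hence a domain, with $z_0\in U\subseteq D$.

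Now I would argue by contradiction: suppose $\bigcup_{f\in\mathcal{F}}f(U)$ is \emph{not} dense in $\widehat{\mathbb{C}}$. Then its complement contains a nonempty open subset of $\widehat{\mathbb{C}}$, and in particular three distinct points $a,b,c\in\widehat{\mathbb{C}}$. By construction, no $f\in\mathcal{F}$ takes any of the values $a,b,c$ on $U$, so the restricted family $\{f\restricted U:f\in\mathcal{F}\}$ omits the three values $a,b,c$ on the domain $U$. By the Fundamental Normality Test, $\{f\restricted U:f\in\mathcal{F}\}$ is then normal on $U$; since $z_0\in U$, this says precisely that $\mathcal{F}$ is normal at $z_0$, contradicting the hypothesis. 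Therefore $\bigcup_{f\in\mathcal{F}}f(U)$ is dense in $\widehat{\mathbb{C}}$; and as the same reasoning shows it can omit at most two points of $\widehat{\mathbb{C}}$, it is in fact dense in $\mathbb{C}$ as well, which is the form needed in the applications.

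The only genuine mathematical content lies in the Fundamental Normality Test itself, which I would simply quote as the classical strong Montel theorem (its proof, via the elliptic modular function or via Zalcman's rescaling lemma, is standard and I would not reproduce it); given it, there is no real obstacle. The two points that need a little care in the write-up are: (i) the correct reading of ``$\mathcal{F}$ is not normal at $z_0$'' as ``$\mathcal{F}$ fails to be normal on every neighbourhood of $z_0$'', so that normality of $\{f\restricted U\}$ on the disk $U\ni z_0$ does contradict it; and (ii) that normality and ``density'' are understood with respect to the spherical metric on $\widehat{\mathbb{C}}$, which is harmless since a subset of $\widehat{\mathbb{C}}$ missing only finitely many points is still dense in $\mathbb{C}$.
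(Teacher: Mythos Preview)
Your argument is correct and is in fact the standard derivation of this form of Montel's theorem from the Fundamental Normality Test. The reduction to a disk, the contradiction via three omitted values, and the care you take with the meaning of ``not normal at $z_0$'' and with the spherical versus planar density are all in order.

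As for the comparison: the paper does not prove this statement at all. Theorem~\ref{montel} appears in the preliminary ``tools'' section alongside the Hypercyclicity Criterion, the Godefroy--Shapiro spectral criterion, and Birkhoff's transitivity theorem, all of which are quoted without proof as classical background results. So there is no approach in the paper to compare yours against; you have supplied a proof where the authors simply cite the theorem. What you wrote would serve perfectly well as a justification, with the Fundamental Normality Test itself taken as the black box.
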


\section{Factorization of extended $\lambda$-eigenoperators of the backward shift.}
\label{factorization}

Assume that $X$ is an extended $\lambda$-eigenoperator
of the backward shift operator $B$, on $H^2(\mathbb{D})$. 
Let us discard a trivial case: $\lambda=0$.
An easy check show that if $\lambda=0$ then
$X1=c_0\neq 0$ and $Xz^n=0$ for any $n\geq 1$. That is, $X$ is a projection over the constant functions and 
therefore it is not hypercyclic.

As a consequence of a result by S. Petrovic \cite{Pet}, when $|\lambda|>1$ then there is no extended $\lambda$-eigenoperators. That is, the extended spectrum of $B$ is the closed unit disk.
Next, we state a result that factorize $X$ when it exists.

\begin{theorem}
Assume that $\lambda\in \overline{\mathbb{D}}$ and $X$ is an extended $\lambda$-eigenoperator of $B$ then:
\begin{enumerate}
    \item If $|\lambda|=1$, then $X$ is an extended $\lambda$-eigenoperator of $B$ if and only if $X=R_\lambda\phi(B)$ 
    with $\phi\in H^\infty(\mathbb{D})$.
    \item If $|\lambda|<1$ then then $X$ is an extended $\lambda$-eigenoperator of $B$ if and only if $X=R_\lambda\phi(B)$ with $\phi\in H^2(\mathbb{D})$.
\end{enumerate}
\end{theorem}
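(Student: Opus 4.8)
The plan is to work with the action of $X$ on the monomials $\{z^n\}_{n\geq 0}$ and translate the relation $BX = \lambda XB$ into a recursion on the "columns" of the matrix of $X$. Write $Xz^n = \sum_{k\geq 0} a_{n,k} z^k$. Since $B z^n = z^{n-1}$ for $n \geq 1$ and $B1 = 0$, the identity $BXz^n = \lambda X B z^n$ reads, for $n \geq 1$,
\[
\sum_{k\geq 0} a_{n,k+1} z^k = \lambda \sum_{k \geq 0} a_{n-1,k} z^k ,
\]
so $a_{n,k+1} = \lambda\, a_{n-1,k}$ for all $n\geq 1$, $k \geq 0$, while $a_{n,0}$ is unconstrained by the commutation relation. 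Iterating this recursion downward shows that every entry is determined by the top row $c_j := a_{j,0} = \langle Xz^j, 1\rangle$: one gets $a_{n,k} = \lambda^{k} c_{n-k}$ for $0 \le k \le n$ and $a_{n,k} = 0$ for $k > n$ (the first few steps being $a_{n,0}=c_n$, $a_{n,1}=\lambda c_{n-1}$, etc., with the lower triangular shape forced because $Xz^0$ has only the term $a_{0,0}=c_0$). Thus $Xz^n = \sum_{k=0}^{n} \lambda^k c_{n-k} z^k$. On the other hand, if $\phi(z) = \sum_j \overline{c_j}\, z^j$ (so that $\phi(B)$, the adjoint of the analytic Toeplitz operator $M_{\overline{\phi}}$ wait — adjusting conventions, $\phi(B)z^n = \sum_{k=0}^n c_{n-k} z^k$), then $R_\lambda \phi(B) z^n = \sum_{k=0}^n c_{n-k} (\lambda z)^k = \sum_{k=0}^n \lambda^k c_{n-k} z^k$, which matches exactly. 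Hence \emph{formally} $X = R_\lambda \phi(B)$, and conversely any such product $\lambda$-commutes with $B$ by a direct check ($B R_\lambda = \lambda R_\lambda B$ and $B$ commutes with $\phi(B)$). So the algebraic skeleton of the statement is immediate; the content is entirely in the \textbf{boundedness} bookkeeping, which is where the two cases split.

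For $|\lambda| = 1$: here $R_\lambda$ is an isometry (a surjective isometry, in fact), so $X$ is bounded if and only if $\phi(B) = R_\lambda^{-1} X = R_{\overline\lambda} X$ is bounded. An operator of the lower-triangular Toeplitz form $z^n \mapsto \sum_{k=0}^n c_{n-k} z^k$ on $H^2(\mathbb{D})$ is bounded precisely when it is $M_{\overline\phi}^\star$ for some $\phi \in H^\infty(\mathbb{D})$ — this is the classical characterization of the commutant of the backward shift (equivalently, of analytic Toeplitz operators via the bounded symbol condition). So $X$ bounded $\iff \phi \in H^\infty(\mathbb{D})$, giving part (1). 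I would cite the standard fact that the commutant of $B = M_z^\star$ on $H^2$ consists exactly of the operators $\phi(B)$ with $\phi \in H^\infty$, rather than reprove it.

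For $|\lambda| < 1$: now $R_\lambda f(z) = f(\lambda z)$ is a compact operator on $H^2(\mathbb{D})$ (diagonal with eigenvalues $\lambda^n \to 0$) with unbounded inverse, and this is exactly why the symbol class enlarges. The key point is that for $|\lambda|<1$ the operator $\phi(B)$ need \emph{not} be bounded, yet $R_\lambda \phi(B)$ can still be bounded because $R_\lambda$ supplies a decaying factor $\lambda^k$ on the $k$-th coordinate of the output. Concretely, I expect the following to be the crux: show that $X$ bounded forces only $\phi \in H^2(\mathbb{D})$ (i.e. $\sum |c_j|^2 < \infty$), and conversely that $\phi \in H^2(\mathbb{D})$ already makes $R_\lambda \phi(B)$ bounded. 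The necessity direction is easy — apply $X$ to the constant function $1$: $X1 = c_0$, and more usefully test against reproducing kernels or simply note $\langle Xz^n, z^0\rangle = c_n$ and use that $X$ bounded implies $\sup_n \|Xz^n\| < \infty$, which already bounds partial sums; actually the cleanest route is $X^\star k_a$ or examining $\|X1\|$ is too weak, so instead I would look at $\langle X z^{n}, \cdot\rangle$ columns and observe that the $n$-th column of $X$ is $(\lambda^k c_{n-k})_{k=0}^n$; letting $n \to \infty$ the column "converges" to the sequence $(\lambda^k c)_{?}$ — hmm, this needs care. The honest hard step is the converse: \textbf{the main obstacle is proving $\phi \in H^2(\mathbb{D}) \Rightarrow R_\lambda\phi(B)$ is bounded, with an explicit norm estimate.} I would do this by writing $R_\lambda \phi(B) = \sum_{j\geq 0} c_j\, R_\lambda B^j$ — wait, more precisely $\phi(B) = \sum_j c_j B^j$ formally and $R_\lambda B^j$ has norm related to $\|R_\lambda\|$ times a shift, but that crude bound diverges; the efficient estimate uses that $R_\lambda B^j$ as an operator has $\|R_\lambda B^j\| = |\lambda|^{?}$-type decay. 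Let me instead factor differently: $R_\lambda \phi(B) = \phi(\lambda B) R_\lambda$ — check: $R_\lambda B = \lambda B R_\lambda$ wait the other way, $B R_\lambda = \lambda R_\lambda B$ so $R_\lambda B = \lambda^{-1} B R_\lambda$, hmm; the clean identity is $R_\lambda B^j = \lambda^{-j} B^j R_\lambda$? No: from $B R_\lambda = \lambda R_\lambda B$ we get $R_\lambda B = \lambda^{-1}(B R_\lambda)$ only if ... I'll instead verify directly on monomials that $\phi(\lambda\,\cdot)$ applied appropriately works: since $\|R_\lambda\| = 1$ (its top eigenvalue $\lambda^0 = 1$) wait no $\|R_\lambda\| \le 1$ when $|\lambda|\le 1$ since eigenvalues are $\lambda^n$, so $\|R_\lambda\| = 1$. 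That doesn't decay. The real mechanism: $R_\lambda \phi(B)$ acts as $z^n \mapsto \sum_{k=0}^n \lambda^k c_{n-k} z^k$, and to bound this operator I would compute $\|R_\lambda\phi(B) f\|^2$ for $f = \sum f_n z^n$ and use Cauchy–Schwarz on the convolution-with-geometric-weight structure: the weight $|\lambda|^k$ makes the relevant Schur test pass, yielding $\|R_\lambda \phi(B)\| \le C_\lambda \|\phi\|_{H^2}$ with $C_\lambda$ depending only on $(1-|\lambda|^2)^{-1}$. I would present this Schur-test/convolution estimate in detail, as it is the heart of part (2); everything else is formal bookkeeping from the recursion above.
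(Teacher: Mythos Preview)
Your algebraic setup (the recursion $a_{n,k}=\lambda^k c_{n-k}$ and the identification $X=R_\lambda\phi(B)$ on polynomials) is correct and in fact reproves what the paper simply cites from Petrovic. Part~(1) is also essentially the paper's argument: $R_\lambda$ is invertible with $R_\lambda^{-1}=R_{1/\lambda}$, so $R_{1/\lambda}X$ commutes with $B$ and hence lies in $\{\phi(B):\phi\in H^\infty\}$.

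The genuine gap is the \emph{necessity} direction in part~(2). Your column attempt cannot succeed: the $n$-th column has squared norm $\sum_{k=0}^{n}|\lambda|^{2k}|c_{n-k}|^2=\sum_{j=0}^{n}|\lambda|^{2(n-j)}|c_j|^2$, and uniform boundedness of this quantity does \emph{not} force $\sum_j|c_j|^2<\infty$ (take $c_j\equiv 1$: the column norms are bounded by $(1-|\lambda|^2)^{-1}$, yet $\phi\notin H^2$). You mention ``$X^\star k_a$'' and then move past it; that was the right thread. The paper's one-line argument is exactly the special case $a=0$: since $X^\star=M_{\overline\phi}R_{\overline\lambda}$ (formally) and $R_{\overline\lambda}1=1$, boundedness of $X$ gives $\overline\phi=X^\star 1\in H^2(\mathbb D)$. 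So the missing idea is simply to pass to the adjoint and evaluate at the constant function.

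For the \emph{sufficiency} direction of~(2) your route is valid and genuinely different from the paper's. Your coefficient computation gives $(R_\lambda\phi(B)f)_k=\lambda^k\sum_{m\ge 0}c_m f_{k+m}$, and a single Cauchy--Schwarz yields
\[
\sum_{k\ge 0}|\lambda|^{2k}\Bigl|\sum_{m\ge 0}c_m f_{k+m}\Bigr|^2
\le \|\phi\|_2^2\,\|f\|_2^2\sum_{k\ge 0}|\lambda|^{2k}
= \frac{\|\phi\|_2^2\,\|f\|_2^2}{1-|\lambda|^2},
\]
so no Schur test is needed. The paper instead works on the adjoint: for $|\lambda|<1$ the dilation $R_{\overline\lambda}$ maps $H^2(\mathbb D)$ boundedly into $H^\infty(\mathbb D)$ (via the pointwise bound $|f(w)|\le C\|f\|_2$ for $|w|\le|\lambda|$), and then $\|M_{\overline\phi}\,R_{\overline\lambda}f\|_2\le\|\phi\|_2\,\|R_{\overline\lambda}f\|_\infty$. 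Both arguments give the same constant $(1-|\lambda|^2)^{-1/2}\|\phi\|_2$; yours is purely sequence-space, the paper's is function-theoretic.
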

\begin{proof}
To show (1), assume that if $X=R_\lambda\phi(B)$ when $\phi\in H^\infty(\mathbb{D})$. 
Since $R_\lambda$ is an extended $\lambda$-eigenoperator of $B$,  an easy check show that $X$ is an extended $\lambda$-eigenoperator of $B$.

Conversely, let  $X$ be an extended $\lambda$-eigenoperator of $B$.
Since $R_{\lambda^{-1}}$ is an extended $1/\lambda$-eigenoperator of $B$, then  $BR_{1\over \lambda}= {1\over \lambda} R_{1\over \lambda}B,$
hence
$$(R_{1\over \lambda}X)B= {1 \over \lambda} R_{1 \over \lambda}BX
={1 \over \lambda} {\lambda}
BR_{1 \over \lambda}X= B(R_{1 \over \lambda}X),$$
that is $B(R_{1\over \lambda}X)$ commutes with $B$.
So, there exists 
$\phi\in H^\infty(B)$  such that  $R_{1 \over \lambda}X
= \phi(B)$ therefore 
$ X= R_{ \lambda} \phi(B)$ as we desired.

To show (2), we will use a result by S. Petrovic (see \cite{Pet}). Specifically, Petrovic proved that if $X$ is an extended $\lambda$-eigenoperator of $B$ ($|\lambda|<1$) then
$$
X=R_\lambda\phi(B)
$$
where $\phi(z)=\sum_{k=0}^\infty c_n z^n$ is a non-zero formal power series.
Taking adjoints, we get that $X^\star=M_{\overline{\phi}} R_{\overline{\lambda}}$ where $M_{\overline{\phi}}$ could be an unbounded multiplication operator. Let us observe that
$X^\star 1=\overline{\phi}\in H^2(\mathbb{D})$ which proves the first implication.

Conversely, assume that $\phi\in H^2(\mathbb{D)}$, and $|\lambda|<1$ we have to show that 
$M_{\overline{\phi}} R_{\overline{\lambda}}$ is bounded. Indeed, for any $f\in H^2(\mathbb{D})$. Since $|\lambda|<1$  the map
$R_{\overline{\lambda}}f$ send $H^2(\mathbb{D})$ on $H^\infty(\mathbb{D})$. Thus, since $R_{\overline{\lambda}}f\in H^\infty(\mathbb{D})$ we get:
\begin{equation}
\label{uno}
\|M_{\overline{\phi}} R_{\overline{\lambda}} f\|_2^2\leq \|\phi\|_2^2 \max_{|w|=|\lambda|}|f(w)|^2
\end{equation}
By using the maximum modulus principle, we get $\max_{|w|=|\lambda|}|f(w)|^2=|f(w_0)|^2$ for some $|w_0|=|\lambda|$. Using the subharmonicity of $|f|^2$ , we get that there exists a constant $C>0$ such that for any $r>0$ such that $|\lambda|<r<1$ we get that
\begin{equation}
    \label{dos}
|f(w_0)|^2\leq C  \frac{1}{2\pi}\int_{0}^{2\pi} |f(re^{i\theta}|^2\,d\theta.
\end{equation}

Thus, using inequality (\ref{dos}) into inequality (\ref{uno}) we get
$$
\|M_{\overline{\phi}} R_{\overline{\lambda}} f\|_2^2\leq C \|\phi\|_2^2 \|f\|_2^2,
$$
which gives the desired result.

\end{proof}

\section{Hypercyclicity of extended $\lambda$-eigenoperators of the backward shift.}

\label{hypercyclicity}

In this section we study when a $\lambda$-extended eigenoperator of $B$, is hypercyclic. 

First of all,  we will see some basics consequences from the theory of hypercyclic operators. Next, we will study some sufficient conditions for hypercyclicity.

\subsection{Hypercyclicity: basic consequences.}

It is known that for Banach space operators, extended $\lambda$-eigenoperators of a given operator $A$ are never hypercyclic provided $|\lambda|<1$ (see \cite[Proposition 3.1]{BGSR}). Thus we get:

\begin{corollary}
Assume that $T$ is an extended $\lambda$-eigenoperator of $B$ with $|\lambda|<1$ then $T$ is not hypercyclic.
\end{corollary}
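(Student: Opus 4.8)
The plan is simply to invoke the general Banach-space fact recalled just above, namely Proposition~3.1 of \cite{BGSR}: if $|\lambda|<1$, then no nonzero bounded operator that $\lambda$-commutes with a fixed bounded operator can be hypercyclic. Here $T$ is a bounded operator on the Banach space $H^2(\mathbb{D})$ with $BT=\lambda TB$ and $B$ bounded, so the statement applies verbatim. For completeness I would also record the short direct argument, which in our setting is especially clean because $B$ is surjective. First dispose of the degenerate case $\lambda=0$: then $BT=0$, so the range of $T$ is contained in $\ker B$, the one-dimensional space of constant functions, and a finite-rank operator is never hypercyclic. So from now on $0<|\lambda|<1$.

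Next I would run a transference argument. A trivial induction from $BT=\lambda TB$ gives $B^{k}T=\lambda^{k}TB^{k}$ for every $k\ge 0$, which we rewrite as $B^{k}T=(\lambda^{k}T)B^{k}$. Since $B$ is onto, so is $B^{k}$, hence $B^{k}$ has dense range; and the elementary transference principle says that if $J$ has dense range and $JT=SJ$, then $S$ is hypercyclic whenever $T$ is, because for a hypercyclic vector $x$ of $T$ one has $S^{n}Jx=J(T^{n}x)$, so the $S$-orbit of $Jx$ contains $J$ applied to a dense set and is therefore dense. Applying this with $J=B^{k}$ and $S=\lambda^{k}T$ shows that $\lambda^{k}T$ is hypercyclic for every $k\ge 1$.

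Finally I would use the standard fact that a hypercyclic operator on a Banach space has spectral radius at least $1$ (if $r(T)<1$ then $\|T^{n}\|\to 0$, so every orbit is bounded and converges to $0$, hence cannot be dense in an infinite-dimensional space). Thus $|\lambda|^{k}\,r(T)=r(\lambda^{k}T)\ge 1$ for all $k\ge 1$; but $r(T)\le\|T\|<\infty$ since $T$ is bounded, and $|\lambda|<1$, so the left-hand side tends to $0$ as $k\to\infty$, a contradiction. The only points that need a little care are remembering to apply the transference step through an operator with dense range (here automatic, since $B$ is surjective) and quoting the two standard lemmas used (transference of hypercyclicity and the spectral-radius lower bound); with those in hand the argument is routine, being essentially the specialization of \cite[Proposition~3.1]{BGSR} to the backward shift.
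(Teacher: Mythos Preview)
Your proposal is correct and follows exactly the paper's approach: the paper simply invokes \cite[Proposition~3.1]{BGSR} and states the corollary without further proof. You do the same, but in addition you unpack the argument behind that citation (the $\lambda=0$ case, the transference $B^kT=(\lambda^kT)B^k$ through the surjective $B^k$, and the spectral-radius lower bound for hypercyclic operators), which is a welcome elaboration but not a different route.
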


The following example provides some intuition  about the last statement in our special case.
\begin{example}
If we consider Rolewicz operator $2B$, the point spectrum of $2B$ is big. If we consider $T=R_\lambda (2B)$ with $|\lambda|<1$, we get that $R_\lambda$ is compact, therefore $T$ is compact. Hence $T$ cannot be hypercyclic.
\end{example}

We stress here that $T=R_\lambda\phi(B)$ with $|\lambda|<1$ is not hypercyclic even if $\phi(B)$ is unbounded. However $T$ could be supercyclic. In fact, in Section \ref{supercyclicity} we characterize when $R_\lambda\phi(B)$ is supercyclic in the case $|\lambda|<1$.


Let us assume that $|\lambda|=1$. In such a case, our extended $\lambda$-eigenoperators factorize as $T=R_\lambda\phi(B)$, with $\phi\in H^\infty(\mathbb{D})$. Then, if $T$ is hypercyclic then $\phi(B)$ is hypercyclic too.

\begin{proposition}
Assume that $T=R_\lambda\phi(B)$ with $|\lambda|=1$ and $\phi\in H^\infty(\mathbb{D})$. If $\phi(\mathbb{D}) \cap \partial \mathbb{D}=\emptyset$ then $T$ is not hypercyclic.
\end{proposition}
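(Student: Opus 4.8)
The plan is to exploit exactly the dichotomy recorded for $M_\phi^\star$ right after Theorem~\ref{spectral}: if one of the half‑orbits of an operator is uniformly bounded, then the operator is not hypercyclic. First I would make the geometric reduction. Since $\phi$ is analytic, hence continuous, on the connected set $\mathbb{D}$, the image $\phi(\mathbb{D})$ is connected; as it misses $\partial\mathbb{D}$, it lies in one of the two components of $\mathbb{C}\setminus\partial\mathbb{D}$, so either $\phi(\mathbb{D})\subset\mathbb{D}$ or $\phi(\mathbb{D})\subset\mathbb{C}\setminus\overline{\mathbb{D}}$. I would also use two elementary facts for $|\lambda|=1$: $R_\lambda$ is a surjective isometry of $H^2(\mathbb{D})$ (it multiplies the $n$‑th Taylor coefficient by $\lambda^n$), so $\|R_\lambda A\|=\|A\|$ for every bounded $A$; and $\|\phi(B)\|=\sup_{z\in\mathbb{D}}|\phi(z)|$, because $\phi(B)$ is the adjoint of the analytic Toeplitz operator with symbol $\overline{\phi}$.

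In the case $\phi(\mathbb{D})\subset\mathbb{D}$ we have $\sup_{z\in\mathbb{D}}|\phi(z)|\le 1$, whence
\[\|T^{n}\|\le\|T\|^{n}=\|R_\lambda\phi(B)\|^{n}=\|\phi(B)\|^{n}=\Big(\sup_{z\in\mathbb{D}}|\phi(z)|\Big)^{n}\le 1\qquad(n\ge 1),\]
so every orbit $\mathcal{O}(f,T)$ stays inside the closed ball of radius $\|f\|$ and cannot be dense in the infinite‑dimensional space $H^2(\mathbb{D})$; thus $T$ has no hypercyclic vector. In the case $\phi(\mathbb{D})\subset\mathbb{C}\setminus\overline{\mathbb{D}}$ we have $|\phi|>1$ on $\mathbb{D}$, so $1/\phi\in H^\infty(\mathbb{D})$ with $\sup_{z\in\mathbb{D}}|1/\phi(z)|\le 1$, and $\phi(B)$ is invertible with $\phi(B)^{-1}=(1/\phi)(B)$ (the product of two elements of the commutant of $B$ carries the product of the symbols). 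Hence $T$ is invertible, $T^{-1}=(1/\phi)(B)\,R_{\overline{\lambda}}$, and the same computation gives
\[\|T^{-n}\|\le\|T^{-1}\|^{n}=\|(1/\phi)(B)\,R_{\overline{\lambda}}\|^{n}=\|(1/\phi)(B)\|^{n}=\Big(\sup_{z\in\mathbb{D}}\tfrac{1}{|\phi(z)|}\Big)^{n}\le 1\qquad(n\ge1).\]
If $f$ were a hypercyclic vector for $T$ then $f\ne 0$ and $\|f\|=\|T^{-n}T^{n}f\|\le\|T^{-n}\|\,\|T^{n}f\|\le\|T^{n}f\|$ for every $n$, so $\mathcal{O}(f,T)$ would avoid the nonempty open ball $\{g:\|g\|<\|f\|\}$, contradicting density. (Equivalently, $T^{-1}=R_{\overline{\lambda}}\,\eta(B)$ with $\eta$ a rotation of $1/\phi$, so $\eta(\mathbb{D})=(1/\phi)(\mathbb{D})\subset\mathbb{D}$ and $T^{-1}$ falls under the first case; since an invertible operator on a separable $F$‑space is hypercyclic if and only if its inverse is, $T$ is not hypercyclic.)

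The argument is short; the only point needing a little care is the second case, where it is $T^{-1}$ and not $T$ that is power bounded, so one must pass to the inverse — which is precisely the $T$‑versus‑$T^{-1}$ alternative emphasized for operators in the commutant of $B$. Everything else reduces to the facts that $R_\lambda$ is unitary when $|\lambda|=1$, that $\|\phi(B)\|=\|\phi\|_{H^\infty}$, and that a power‑bounded operator on an infinite‑dimensional space is never hypercyclic.
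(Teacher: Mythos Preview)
Your proof is correct and follows essentially the same approach as the paper's: split into the two cases $\phi(\mathbb{D})\subset\mathbb{D}$ and $\phi(\mathbb{D})\subset\mathbb{C}\setminus\overline{\mathbb{D}}$, and show that $T$ (respectively $T^{-1}$) is a contraction, hence not hypercyclic. You simply supply more detail than the paper does---the connectedness argument for the dichotomy, the explicit identification $\phi(B)^{-1}=(1/\phi)(B)$, and the orbit bound from $\|T^{-n}\|\le 1$---but the underlying idea is identical.
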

\begin{proof}
   Indeed, if $\phi(\mathbb{D})\subset \mathbb{D}$ then since $\|T\|\leq \|R_\lambda\|\|\phi\|_{\infty}$ we get that $T$ is a contraction, therefore $T$ is not hypercyclic.

  On the other hand, if $\phi(\mathbb{D})\subset \overline{\mathbb{D}}^c$, then we see that $T$ is invertible and $\|T^{-1}\|\leq 1$, therefore  $T^{-1}$ is not hypercyclic. Hence $T$ is not hypercyclic.\end{proof}

Assume that $T=R_\lambda\phi(B)$, with $|\lambda|=1$ and $\phi(B)$ hypercyclic. In strong contrast with the results obtained in \cite{BGSR}, if $\phi(B)$ is hypercyclicity then we cannot guarantee that $T$ is hypercyclic too.  The next example provides a hypercyclic operator $\phi(B)$, with $\phi\in H^\infty(\mathbb{D})$, such that  for some $\lambda$ with $|\lambda|=1$, the extended $\lambda$-eigenoperator $R_\lambda \phi(B)$  is not hypercyclic.

\begin{example}
Let us consider $\lambda=i$ the imaginary unit and the maps
$$
\phi_0(z)=\frac{1}{2}z+1-\frac{1}{10}.
$$
and
$$
\phi_1(z)=\frac{1}{2}z+1-\frac{1}{100}.
$$
Clearly $\phi_0(\mathbb{D})$ is the disk centered at $1-\frac{1}{10}$ with radius $1/2$ and
$\phi_1(\mathbb{D})$ is the disk centered at $1-\frac{1}{100}$ with radius $1/2$ (see Figure \ref{fig1}).

\begin{figure}[h]
\centering
\includegraphics[scale=0.5]{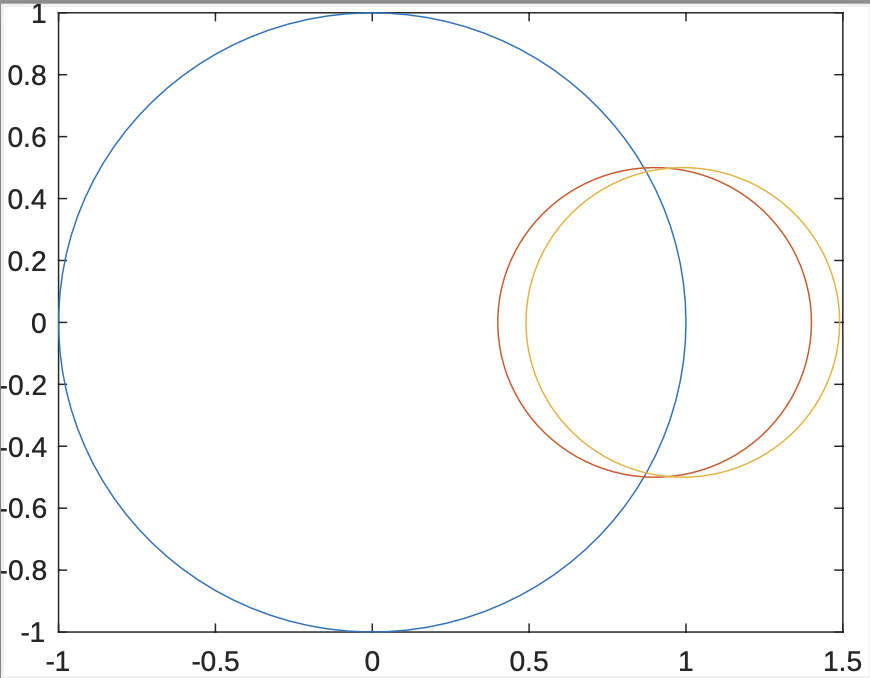}
\caption{The image $\phi_0(\mathbb{D})$ and $\phi_1(\mathbb{D})$.}
\label{fig1}
\end{figure}
Thus $\phi_0(\mathbb{D})\cap \partial \mathbb{D}\neq \emptyset$ and
$\phi_1(\mathbb{D})\cap \partial \mathbb{D}\neq \emptyset$,
 therefore $\phi_0(B)$ and $\phi_1(B)$ are hypercyclic.
 
Let us see consider the operators $T_0=R_{i}\phi_0(B)$ and $T_1=R_{i}\phi_1(B)$,
and the powers $T^4_0$ and $T_1^4$. Since $i^4=1$ we obtain that $T_0^4$ and $T_1^4$ commute with $B$. Specifically, $T_0^4=\phi_0(B)\phi_0(iB)\phi_0(-iB)\phi_0(-B)$ and
 $T_1^4=\phi_1(B)\phi_1(iB)\phi_1(-iB)\phi_1(-B)$
Let us denote 
$\Phi_0(z)=\phi_0(z)\phi_0(iz)\phi_0(-iz)\phi_0(-z)$ and $\Phi_1(z)=\phi_1(z)\phi_1(iz)\phi_1(-iz)\phi_1(-z)$. 

Now, using Matlab computations, we see that $\Phi_0(\mathbb{D})\subset \mathbb{D}$
and $\Phi_1(\mathbb{D}) \cap \partial \mathbb{D}\neq \emptyset$ (see Figure \ref{fig2}).

\begin{figure}[h]
\centering
\includegraphics[scale=0.8]{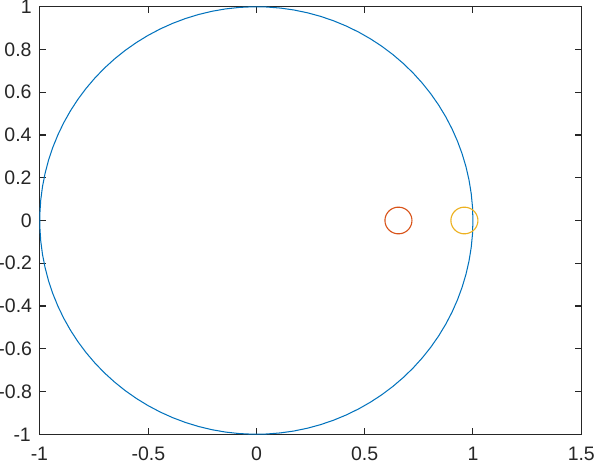}
\caption{The image $\Phi_0(\mathbb{D})$ and $\Phi_1(\mathbb{D})$  .}
\label{fig2}
\end{figure} 
Therefore $T_0^4$ is not hypercyclic and $T_1^4$ is hypercyclic. Hence, $T_0$ is not hypercyclic, but $T_1$ is hypercyclic by Ansari's result (\cite{ansari}).

\end{example}
From the above considerations we see that the hypercyclicity of $R_\lambda\phi(B)$   does not depend generally on intersection of the point spectrum of $\phi(B)$ at $\partial \mathbb{D}$, but it clearly depends of  the shape of the point spectrum of $\phi(B)$. This shape is intimatelly connected to  the limits of the sequence of sets $\Phi_{n}(\mathbb{D})$ when $\Phi_{n}(z)=\phi(z)\phi(\lambda z)\cdots\phi(\lambda^{n-1}z)$. Specifically, if the sequence $\Phi_{n}(\mathbb{D})$ is separated from $\infty$ or from $0$ then $T$ is not hypercyclic.
\begin{proposition}
\label{acotacion}
Assume that $\lambda\in\partial \mathbb{D}$ and $T=R_{\lambda}\phi(B)$ is an extended $\lambda$-eigenoperator of $B$.
\begin{enumerate}
\item If $\|\Phi_{n}\|_\infty<M$ for all $n$ then $T$ is not hypercyclic.
\item If there exists $c>0$ such that $c\leq Inf_{z\in \mathbb{D}}|\Phi_n(z)|$ for all $n\geq n_0$ for some $n_0$, then $T=R_{\lambda}\phi(B)$ is not hypercyclic.
\end{enumerate}
\end{proposition}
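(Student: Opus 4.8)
The plan is to reduce both claims to uniform estimates—an upper bound and a lower bound, respectively—on the norms of the iterates $T^{n}$. Everything hinges on the factorization of iterates already implicit in the Example above, so the first step is to prove by induction on $n$ that
\[
T^{n}\;=\;R_{\lambda^{n}}\,\Phi_{n}(B),\qquad \Phi_{n}(z)=\phi(z)\,\phi(\lambda z)\cdots\phi(\lambda^{n-1}z).
\]
For the inductive step one writes $T^{n+1}=R_{\lambda}\,\phi(B)\,R_{\lambda^{n}}\Phi_{n}(B)$ and commutes $\phi(B)$ past $R_{\lambda^{n}}$ by means of the relation $BR_{\mu}=\mu R_{\mu}B$ (valid for every $\mu$ and already used), which gives $R_{\lambda^{n}}^{-1}\phi(B)R_{\lambda^{n}}=\phi(\lambda^{n}B)$; regrouping and using that $\psi\mapsto\psi(B)$ is multiplicative on the commutant of $B$ yields $R_{\lambda^{n+1}}\phi(\lambda^{n}B)\Phi_{n}(B)=R_{\lambda^{n+1}}\Phi_{n+1}(B)$. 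Next I would record two elementary facts. Because $\abs{\lambda}=1$, each dilation $R_{\lambda^{n}}$ is a surjective isometry of $H^{2}(\mathbb{D})$, so $\norm{T^{n}g}=\norm{\Phi_{n}(B)g}$ for every $g$. Because $\phi\in H^{\infty}(\mathbb{D})$, each product $\Phi_{n}$ again lies in $H^{\infty}(\mathbb{D})$, with $\norm{\Phi_{n}}_{\infty}\le\norm{\phi}_{\infty}^{\,n}$; and, $\Phi_{n}(B)$ being an element of the commutant of $B$, one has $\norm{\Phi_{n}(B)}=\norm{\Phi_{n}}_{\infty}$, while $\Phi_{n}(B)$ is invertible precisely when $1/\Phi_{n}\in H^{\infty}(\mathbb{D})$, in which case $\norm{\Phi_{n}(B)^{-1}}\le\norm{1/\Phi_{n}}_{\infty}$.

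With these tools part (1) is immediate: if $\norm{\Phi_{n}}_{\infty}<M$ for all $n$, then $\norm{T^{n}g}\le M\norm{g}$ for every $g$ and every $n$, so every orbit $\mathcal{O}(g,T)$ is bounded; a bounded subset of the Banach space $H^{2}(\mathbb{D})$ is never dense, hence $T$ has no hypercyclic vector.

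For part (2) I would run the dual argument. Fix $n\ge n_{0}$. Since $c\le\inf_{z\in\mathbb{D}}\abs{\Phi_{n}(z)}$ and $\Phi_{n}\in H^{\infty}(\mathbb{D})$, the reciprocal $1/\Phi_{n}$ is bounded and analytic on $\mathbb{D}$ with $\norm{1/\Phi_{n}}_{\infty}\le 1/c$, so $1/\Phi_{n}\in H^{\infty}(\mathbb{D})$; therefore $\Phi_{n}(B)$ is invertible and $\norm{\Phi_{n}(B)^{-1}}\le 1/c$, i.e.\ $\norm{\Phi_{n}(B)g}\ge c\,\norm{g}$ for all $g$. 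Combined with the isometry of $R_{\lambda^{n}}$ this gives $\norm{T^{n}g}\ge c\,\norm{g}$ for every $g$ and every $n\ge n_{0}$. Hence $0$ is not a limit point of the orbit of any $g\neq 0$; since the orbit of a hypercyclic vector is dense and therefore has $0$ among its limit points, $T$ has no hypercyclic vector.

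I do not expect a genuine obstacle here: the two assertions are simply the ``bounded orbit'' and ``orbit bounded away from $0$'' alternatives of the Godefroy--Shapiro-type dichotomy recalled in Section~\ref{tools}, now read off from the factorization $T^{n}=R_{\lambda^{n}}\Phi_{n}(B)$. The two places that require a little care are the inductive identity itself—checking that conjugating $\phi(B)$ by the dilation $R_{\lambda^{n}}$ merely replaces the symbol $\phi(z)$ by $\phi(\lambda^{n}z)$—and, in part (2), the implication ``$\Phi_{n}$ bounded away from $0$ on $\mathbb{D}$'' $\Rightarrow$ ``$\Phi_{n}(B)$ bounded below'', which rests on the standard fact that a multiplier $M_{\psi}$ is invertible on $H^{2}(\mathbb{D})$ exactly when $\psi$ and $1/\psi$ belong to $H^{\infty}(\mathbb{D})$. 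Both steps use $\abs{\lambda}=1$ in an essential way: it is what makes $R_{\lambda^{n}}$ an isometry and what keeps $\norm{\phi(\lambda^{j}\,\cdot\,)}_{\infty}=\norm{\phi}_{\infty}$, so that the products $\Phi_{n}$ remain bounded and stay in $H^{\infty}(\mathbb{D})$.
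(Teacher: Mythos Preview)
Your proof is correct. Part (1) matches the paper's argument exactly: power-boundedness of $T$ follows from $\norm{T^{n}}=\norm{\Phi_{n}(B)}\le\norm{\Phi_{n}}_{\infty}<M$.

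For part (2) you take a slightly different, and in fact more elementary, route than the paper. The paper observes that $T^{n}$ is invertible for $n\ge n_{0}$ with $\norm{T^{-n}}\le 1/c$, then invokes the standard facts that $T$ is hypercyclic iff $T^{n}$ is (Ansari) and iff $T^{-n}$ is, concluding that the power-boundedness of $T^{-1}$ rules out hypercyclicity. You instead use the lower bound $\norm{T^{n}g}\ge c\norm{g}$ directly to show that no nonzero orbit accumulates at $0$, which avoids having to check that $T$ itself is invertible and bypasses the Ansari/inverse machinery entirely. Both arguments rest on the same computation $\norm{\Phi_{n}(B)^{-1}}\le\norm{1/\Phi_{n}}_{\infty}\le 1/c$; yours simply draws the conclusion one step earlier.
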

\begin{proof}
To show (1), let us see that 
\begin{eqnarray*}
\|(R_{\lambda}\phi(B))^n\|&=&|1\cdot\lambda \cdots\lambda^{n-1}| \|R_\lambda^n\|\|\Phi_n(B)\| \\
&\leq & \|\Phi_n\|_{\infty}\leq M,
\end{eqnarray*}
that is $T=R_{\lambda}\phi(B)$ is power bounded, and therefore $T$ is not hypercyclic.

For (2), let us observe that $T^n$ is invertible for $n\geq n_0$. Since $T$ is hypercyclic if and only if $T^n$ is hypercyclic, we get that  $T$ is hypercyclic if and only if $T^{-n}$ is hypercyclic. However, and easy check show that
$
\|T^{-n}\|\leq \frac{1}{c}
$
for all $n\geq n_0$. Therefore $T^{-n}$ with $n\geq n_0$, is not hypercyclic. Hence $T$ is not hypercyclic.
\end{proof}

However, for values $\lambda\in\partial\mathbb{D}$ which are roots of the unity, a characterization can be obtained using Ansari and Godefroy-Shapiro results.

\begin{proposition}
 Assume that $T=R_\lambda\phi(B)$ is an extended $\lambda$-eigenoperator of $B$ and $\lambda^n=1$ for some positive integer $n$. If we denote $\Phi(z)=\phi(z)\phi(\lambda z)\cdots \phi(\lambda^{n-1}z)$, then $T$ is hypercyclic if and only if $\Phi(\mathbb{D})\cap\partial\mathbb{D}\neq \emptyset.$
\end{proposition}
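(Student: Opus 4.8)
The plan is to reduce the hypercyclicity of $T=R_\lambda\phi(B)$ to that of its $n$-th power $T^n$, which we can analyze via the Godefroy–Shapiro machinery because $\lambda^n=1$ forces $T^n$ into the commutant of $B$. First I would note that since $\lambda^n=1$, a direct computation using $BR_\lambda=\lambda R_\lambda B$ (equivalently $R_\lambda \phi(B)=\phi(\lambda^{-1}B)R_\lambda$) gives
\[
T^n=(R_\lambda\phi(B))^n=\phi(B)\phi(\lambda B)\cdots\phi(\lambda^{n-1}B)\,R_\lambda^n=\Phi(B),
\]
using that the scalar factor $1\cdot\lambda\cdots\lambda^{n-1}$ times $R_\lambda^n$ collapses: $R_\lambda^n=R_{\lambda^n}=R_1=\mathrm{Id}$, and the product of the dilation conjugates of $\phi(B)$ rearranges to $\Phi(B)$ exactly as in the Example. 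Here $\Phi\in H^\infty(\mathbb{D})$ since each factor is. By Ansari's theorem \cite{ansari}, $T$ is hypercyclic if and only if $T^n=\Phi(B)$ is hypercyclic.

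It therefore remains to prove that $\Phi(B)$ is hypercyclic if and only if $\Phi(\mathbb{D})\cap\partial\mathbb{D}\neq\emptyset$. The "if" direction is exactly the Godefroy–Shapiro argument recalled in Section \ref{tools}: writing $\Phi(B)=M_{\overline{\psi}}^\star$ with $\psi=\overline{\Phi}$ (so that $\psi(\mathbb{D})$ is the reflection of $\Phi(\mathbb{D})$ and still meets $\partial\mathbb{D}$), the reproducing kernels $k_a$ satisfy $\Phi(B)k_a=\Phi(a)k_a$, and since $\Phi$ is open, the set $\{a:\ |\Phi(a)|>1\}$ and the set $\{a:\ |\Phi(a)|<1\}$ are both non-empty open subsets of $\mathbb{D}$; the corresponding families of kernels span dense subspaces of $H^2(\mathbb{D})$ (kernels over any set with an accumulation point in $\mathbb{D}$ are complete), so Theorem \ref{spectral} applies and $\Phi(B)$ is hypercyclic. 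The "only if" direction is the dichotomy already used in Proposition \ref{acotacion}: if $\Phi(\mathbb{D})\cap\partial\mathbb{D}=\emptyset$ then by connectedness of $\mathbb{D}$ either $\Phi(\mathbb{D})\subset\mathbb{D}$, in which case $\|\Phi(B)\|=\|M_{\overline\Phi}^\star\|=\|\Phi\|_\infty\le 1$ so $\Phi(B)$ is a contraction and not hypercyclic, or $\Phi(\mathbb{D})\subset\overline{\mathbb{D}}^c$, in which case $\Phi(B)$ is invertible with $\|\Phi(B)^{-1}\|\le 1$, so $\Phi(B)^{-1}$ is a contraction and $\Phi(B)$ is again not hypercyclic (no contraction, and no operator whose inverse is a contraction, can be hypercyclic). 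Combining the two equivalences yields the claim.

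The only genuinely delicate point is the algebraic identity $T^n=\Phi(B)$: one must be careful that the conjugation rule $R_\lambda\phi(B)R_\lambda^{-1}=\phi(\lambda^{-1}B)$ (or the version with $\lambda$) is applied with the correct powers of $\lambda$ when pushing all the $R_\lambda$ factors to the right, and that the scalar prefactors cancel using $\lambda^n=1$; this is exactly the computation performed in the Example for $n=4$, and it goes through verbatim for general $n$. Everything else is a direct invocation of Ansari's theorem and of the Godefroy–Shapiro spectral criterion, so I expect the write-up to be short.
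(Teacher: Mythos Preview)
Your proposal is correct and follows essentially the same route as the paper: compute $T^n=\Phi(B)$ using the intertwining relation $\phi(B)R_\lambda=R_\lambda\phi(\lambda B)$ together with $R_\lambda^n=R_{\lambda^n}=I$, then invoke Ansari's theorem and the Godefroy--Shapiro characterization of hypercyclic commutant elements. One small point to clean up: there is no extra scalar factor $1\cdot\lambda\cdots\lambda^{n-1}$ in the computation of $T^n$ --- pushing the $R_\lambda$'s through produces only $R_\lambda^n$ and the product $\phi(\lambda^{n-1}B)\cdots\phi(B)$, with no additional scalars --- so you should drop that remark from the write-up.
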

 \begin{proof}
 Let us see that
 $T^n=R_\lambda \phi(B)\cdots R_{\lambda}\phi(B)$, and since $R_\lambda$ is an extended $\lambda$-eigenoperator of $B$ we have
 $ \phi(B)R_\lambda=\phi(\lambda B)R_\lambda $. Thus
 $T^n=(R_\lambda)^n \phi(\lambda^{n-1}B)\phi(\lambda^{n-2}B)\cdots \phi(B)$. Now, since $\lambda^n=1$ we get $(R_\lambda)^n=R_{\lambda^n}=I$. Therefore, since $T^n=\Phi(B)$ is an element of the commutant of $B$, using Godefroy-Shapiro characterization,
 $T^n$ is hypercyclic if and only if $\Phi(\mathbb{D})\cap \partial \mathbb{D}\neq \emptyset$. Hence, using Ansari's result we get that $T$ is hypercyclic if and only if $\Phi(\mathbb{D})\cap \partial \mathbb{D}\neq \emptyset$ as we desired.

 \end{proof}

To simplify the notation, set $\alpha=\overline{\lambda}$ and  $\omega=1/\overline{\lambda}$.
Thus, the existence of hypercyclic extended $\lambda$-eigenoperators will depends on the dynamics of the sequence of analytic functions
$$\Psi_n(z)=\overline{\phi}(z) \cdot \overline{\phi}(\alpha z)\cdots \overline{\phi}(\alpha^{n-1}z)$$
and
$$
\Omega_n(z)=\overline{\phi}(\omega z)\cdots \overline{\phi}(\omega^{n}z).
$$
In fact, the dynamics of both sequences are related by the formula $\Omega_n(z)=\Psi_n(\omega^nz)$. The reader keep in mind that $\Psi_n(z)\to \infty$ (resp. $0$) if and only if $\overline{\Psi_n(z})\to \infty$ (resp. $0$), and the same property is satisfied for the sequence of mappings $\Omega_n(z)$.
The next result sheds light on this direction.

\begin{proposition}
Assume that $\lambda\in \partial \mathbb{D}$ is an irrational rotation. If there exists a sequence $(n_k)\subset \mathbb{N}$ such that
$$
C_0=\{z\in \mathbb{D}\,:\,\Psi_{n_k}(z)\to 0\}
$$
and
$$
C_1=\{z\in \mathbb{D}\,:\, \Omega_{n_k}(z)\to \infty\}
$$
have both an accumulation point on $\mathbb{D}$, then $T=R_\lambda\phi(B)$ is hypercyclic on $H^2(\mathbb{D})$.
\end{proposition}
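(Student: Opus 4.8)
The plan is to deduce hypercyclicity from the Hypercyclicity Criterion recalled in Section~\ref{tools}, taking both dense subsets and the formal inverse branches to be spanned by reproducing kernels. Recall that $T=R_\lambda\phi(B)$ is a bounded operator on $H^2(\mathbb D)$ (for $|\lambda|=1$ one has $\phi\in H^\infty(\mathbb D)$ by the factorization of Section~\ref{factorization}, and $R_\lambda$ is unitary). The operator acts very simply on kernels: $\phi(B)$, being the adjoint of a multiplier, acts diagonally, $\phi(B)k_a=\mu_a k_a$ for a suitable scalar $\mu_a$, while $R_\lambda k_a=k_{\overline{\lambda}a}$. Iterating $T=R_\lambda\phi(B)$ therefore gives
\[
T^{n}k_a=P_n(a)\,k_{\overline{\lambda}^{\,n}a},\qquad P_n(a)=\prod_{j=0}^{n-1}\mu_{\overline{\lambda}^{\,j}a}.
\]

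The first genuine step is to match the scalars $|P_n|$ with the data of the statement. A direct computation with $k_a(z)=1/(1-\overline{a}z)$, combined with the remark preceding the statement (convergence of $\Psi_n(z)$, resp. $\Omega_n(z)$, to $0$ or to $\infty$ being insensitive to conjugation), yields
\[
|P_n(a)|=|\Psi_n(a)|\quad(a\in\mathbb D),\qquad |P_n(\lambda^{n}b)|=|\Omega_n(b)|\quad(b\in\mathbb D),
\]
where the second identity also uses $\overline{\lambda}^{\,n}\lambda^{n}=1$, so that $T^n$ carries the kernel at $\lambda^n b$ to a multiple of the kernel at $b$. Since $|\lambda|=1$ we have $\|k_{\overline{\lambda}^{\,n}a}\|=\|k_a\|$, hence $\|T^nk_a\|=|\Psi_n(a)|\,\|k_a\|$.

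Now fix the subsequence $(n_k)$ given by the hypothesis and set $X_0=\operatorname{span}\{k_a:\ a\in C_0\}$ and $Y_0=\operatorname{span}\{k_b:\ b\in C_1\}$. Both are dense in $H^2(\mathbb D)$: a vector orthogonal to every $k_a$ with $a$ in a subset of $\mathbb D$ having an accumulation point inside $\mathbb D$ vanishes on that subset and hence identically. Condition (1) of the Criterion then holds on $X_0$ by linearity, since $\|T^{n_k}k_a\|=|\Psi_{n_k}(a)|\,\|k_a\|\to 0$ for $a\in C_0$. For the inverse branches, note that for $b\in C_1$ one has $\Omega_{n_k}(b)\to\infty$, so $P_{n_k}(\lambda^{n_k}b)\neq 0$ for all large $k$; on those $k$ put $S_{n_k}k_b=P_{n_k}(\lambda^{n_k}b)^{-1}\,k_{\lambda^{n_k}b}$, extend linearly to $Y_0$ (distinct kernels being linearly independent), and set $S_{n_k}$ to $0$ on the finitely many remaining indices, which is harmless because only the tail of $(n_k)$ enters the limits. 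Then $\|S_{n_k}k_b\|=|\Omega_{n_k}(b)|^{-1}\|k_b\|\to 0$, giving (2); and from $T^{n_k}k_{\lambda^{n_k}b}=P_{n_k}(\lambda^{n_k}b)\,k_b$ we get $T^{n_k}S_{n_k}k_b=k_b$ for all large $k$, so $T^{n_k}S_{n_k}$ is eventually the identity on $Y_0$ and (3) holds. By the Hypercyclicity Criterion, $T$ is hypercyclic on $H^2(\mathbb D)$.

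The only delicate point I anticipate is the bookkeeping in the second step: identifying $|P_n(a)|$ with $|\Psi_n(a)|$ and $|P_n(\lambda^n b)|$ with $|\Omega_n(b)|$ requires care with which conjugate and which power of $\lambda$ appears in the argument of $\phi$, and this is precisely what the conjugation remark before the statement is there to absorb. Everything else is soft: density of the kernel spans is the identity theorem, and the branches $S_{n_k}$ are essentially forced by the formula $T^n k_a=P_n(a)k_{\overline{\lambda}^{\,n}a}$ once one knows the scalar does not vanish at the relevant points. I also note that this argument does not use that $\lambda$ is an irrational rotation; that hypothesis only separates the present proposition from the root-of-unity case (handled separately, with a cleaner characterization) and is, presumably, the regime in which the sets $C_0$ and $C_1$ can be arranged to have accumulation points inside $\mathbb D$.
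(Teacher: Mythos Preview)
Your proof is correct and follows essentially the same route as the paper: both arguments apply the Hypercyclicity Criterion with $X_0=\operatorname{span}\{k_a:a\in C_0\}$ and $Y_0=\operatorname{span}\{k_b:b\in C_1\}$, use the identity theorem to get density, compute $T^{n_k}k_a=\overline{\Psi_{n_k}(a)}\,k_{\alpha^{n_k}a}$ for condition~(1), and take the right inverse on kernels (the paper iterates a single $S$ with $Sk_a=\overline{\overline{\phi}(\omega a)}^{-1}k_{\omega a}$, you define $S_{n_k}$ directly, which amounts to the same formula) to obtain conditions~(2) and~(3). Your remark that the irrational-rotation hypothesis is not actually used in this particular argument is also accurate.
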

\begin{proof}
Indeed, we can see that the Hypercyclicy Criterion is satisfied by selecting the dense subsets
$$
X_0=\textrm{linear span}\{k_{z_0}(z)\,:\,z_0\in C_0\}
$$
and
$$
Y_0=\textrm{linear span}\{k_{z_0}(z)\,:\,z_0\in C_1\}.
$$
Both subsets $X_0, Y_0$ are dense and
$$
T^{n_k}k_{z_0}(z)=\overline{\Psi_{n_k}(z_0)}k_{\alpha^{n_k}z_0}\to 0
$$  
pointwise on $X_0$. On the other hand,  if $a\in C_1$ then  $\overline{\phi}(\omega^n a)\neq 0$ we can define
$$
S k_a(z)=\frac{1}{\overline{\overline{\phi}(\omega a)}} k_{\omega a}(z)
$$
then $S$ is well defined on $Y_0$ and
$S^{n_k}$ converges pointwise to $0$ on $Y_0$, then $T$ is hypercyclic.
\end{proof}

The above result is a natural way to apply the Hypercyclicity Criterion. However, the hyphotesis are very restrictive and next we will get some weakening.

\begin{proposition}
\label{unique}
Assume that $\lambda\in \partial \mathbb{D}$ is an irrational rotation and $\phi\in H^\infty(\mathbb{D})$. 
\begin{enumerate}
\item If there exist $z_0\in \mathbb{D}$ and a subsequence $(n_k)\subset \mathbb{N}$ such that
$\Psi_{n_k}(z_0)\to 0$, then $T^{n_k}$ converge pointwise to zero on a dense subset.

\item If there exists $z_1\in \mathbb{D}$ such that $\Omega_{n_k}(z_1)\to \infty$ and there exists $m>0$
such that $|\phi(\omega^{n}z_0)|>M>0$ for all $n\geq 1$, then the right inverse $Sk_a=\frac{1}{\overline{\overline{\phi}(\omega a)}}k_{\omega a}(z)$ is well defined on a dense subset $Y_0$ and $S^{n_k}$ converges pointwise to zero on $Y_0$.
\end{enumerate}
\end{proposition}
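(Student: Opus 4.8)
The plan is to mimic the reproducing--kernel mechanism behind the previous proposition, but to build the required dense sets out of the \emph{forward orbit} of the single point $z_0$ (resp.\ $z_1$) under the rotation: since an irrational rotation carries such an orbit densely around a whole circle inside $\mathbb{D}$, the associated kernels automatically span a dense subspace of $H^2(\mathbb{D})$, and the multiplicative structure of $\Psi_n$ and $\Omega_n$ transports the decay (resp.\ blow--up) from the base point to every point of the orbit at the cost of only finitely many bounded factors. The step I expect to be the real obstacle is precisely this passage from a \emph{single} point to a dense set.

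To prove (1) I would first observe that, with $\alpha=\overline{\lambda}$ (again an irrational rotation), the points $\{\alpha^m z_0:\ m\ge 0\}$ are pairwise distinct and dense in the circle $\{|z|=|z_0|\}$. Assuming $z_0\ne 0$ — the case $z_0=0$ reduces to it, since $\Psi_{n_k}(0)=\overline{\phi}(0)^{n_k}\to 0$ forces $|\overline{\phi}(0)|<1$, hence $\sup_{|z|=r}|\overline{\phi}|<1$ for some small $r>0$ and then $\Psi_n(z_0')\to 0$ for every $0<|z_0'|\le r$ — this orbit is an infinite subset of $\mathbb{D}$ with an accumulation point inside $\mathbb{D}$, so by the reproducing property no nonzero $f\in H^2(\mathbb{D})$ is orthogonal to all the $k_{\alpha^m z_0}$, and therefore $X_0=\lspan\{k_{\alpha^m z_0}:\ m\ge 0\}$ is dense. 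Exactly as in the preceding proposition, $T^{n_k}k_{\alpha^m z_0}=\overline{\Psi_{n_k}(\alpha^m z_0)}\,k_{\alpha^{n_k+m}z_0}$, and since $\Psi_n(z)=\prod_{j=0}^{n-1}\overline{\phi}(\alpha^{j}z)$ one has the telescoping identity
$$\Psi_{n_k}(\alpha^m z_0)=\frac{\Psi_{n_k+m}(z_0)}{\Psi_m(z_0)}\qquad\text{whenever }\Psi_m(z_0)\ne 0,$$
together with $|\Psi_{n_k+m}(z_0)|\le\|\phi\|_{\infty}^{\,m}\,|\Psi_{n_k}(z_0)|\to 0$. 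As $\|k_{\alpha^{n_k+m}z_0}\|=(1-|z_0|^2)^{-1/2}$ is independent of $k$, this gives $T^{n_k}k_{\alpha^m z_0}\to 0$ for each such $m$, hence $T^{n_k}\to 0$ pointwise on $X_0$ by linearity. The one wrinkle is that $\overline{\phi}$ may vanish at some $\alpha^{j}z_0$; since its zeros are isolated and the $\alpha^{j}z_0$ are distinct this happens for finitely many $j$ only, and for the affected indices $m$ the product $\Psi_{n_k}(\alpha^m z_0)$ contains such a zero factor once $n_k$ is large, so $T^{n_k}k_{\alpha^m z_0}=0$ eventually and the conclusion still holds on the whole span.

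For (2) the argument is parallel and a bit cleaner, the positivity hypothesis removing the vanishing issue: set $\omega=1/\overline{\lambda}$ (an irrational rotation) and $Y_0=\lspan\{k_{\omega^m z_1}:\ m\ge 0\}$, dense for the same reason. Since $|\phi(\omega^{n}z_1)|>M>0$ for all $n\ge 1$, the operator $Sk_a=\frac{1}{\overline{\overline{\phi}(\omega a)}}k_{\omega a}$ and all its powers are defined at every generator, with $S^{n}k_{\omega^m z_1}=\frac{1}{\overline{\Omega_n(\omega^m z_1)}}\,k_{\omega^{n+m}z_1}$, where $\Omega_n(z)=\prod_{j=1}^{n}\overline{\phi}(\omega^{j}z)$. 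The same telescoping gives
$$\Omega_{n_k}(\omega^m z_1)=\Omega_{n_k}(z_1)\cdot\frac{\prod_{i=n_k+1}^{n_k+m}\overline{\phi}(\omega^{i}z_1)}{\prod_{i=1}^{m}\overline{\phi}(\omega^{i}z_1)},$$
where the numerator has modulus at least $M^{m}$ and the fixed nonzero denominator has modulus at most $\|\phi\|_{\infty}^{\,m}$, so $|\Omega_{n_k}(\omega^m z_1)|\ge (M/\|\phi\|_{\infty})^{m}|\Omega_{n_k}(z_1)|\to\infty$. Since $\|k_{\omega^{n_k+m}z_1}\|=(1-|z_1|^2)^{-1/2}$ is independent of $k$, we obtain $\|S^{n_k}k_{\omega^m z_1}\|\to 0$ for every $m$, i.e.\ $S^{n_k}\to 0$ pointwise on $Y_0$. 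Together with the identity $T^{n_k}S^{n_k}=\mathrm{Id}$ on $Y_0$ (which holds because $TSk_a=k_a$, as $\alpha\omega=1$), parts (1) and (2) feed directly into the Hypercyclicity Criterion; the subsidiary difficulty, already flagged, is quarantining the zeros of $\overline{\phi}$ along the orbit in part (1).
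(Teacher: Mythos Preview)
Your overall strategy is the paper's strategy: build the dense sets from rotation--orbits of reproducing kernels and push the hypothesis along the orbit via a telescoping identity. Part~(2) is correct and matches the paper's computation essentially line for line. Your separate handling of $z_0=0$ in part~(1) is a nice touch that the paper glosses over.

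There is, however, a real gap in your treatment of the zero case in part~(1). Suppose the forward orbit $\{\alpha^{j}z_0:j\ge 0\}$ meets the (finite) zero set of $\overline{\phi}$ on the circle $|z|=|z_0|$ exactly at indices $j_1<\dots<j_p$. Your telescoping covers $m\le j_1$; your ``eventual zero factor'' argument covers $j_1<m\le j_p$; but for $m>j_p$ neither works. Indeed
\[
\Psi_{n_k}(\alpha^{m}z_0)=\prod_{l=m}^{m+n_k-1}\overline{\phi}(\alpha^{l}z_0)
\]
contains no vanishing factor once $m>j_p$, and the hypothesis $\Psi_{n_k}(z_0)\to 0$ is vacuous here (it is $0$ for all $n_k>j_1$), so it tells you nothing about these tail products. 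A concrete obstruction: take $\overline{\phi}$ with a single zero on $|z|=|z_0|$, located at $\alpha^{5}z_0$, and with $|\overline{\phi}|>1$ elsewhere on that circle; then $\Psi_{n_k}(\alpha^{m}z_0)\to\infty$ for every $m\ge 6$, so $T^{n_k}$ does not go to zero on most of your $X_0$.

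The paper sidesteps this by \emph{switching to the backward orbit} in the zero case: with $\omega=\alpha^{-1}$, set $X_0=\lspan\{k_{\omega^{k_0}z_0}:k_0\ge 0\}$. Then $T^{n}k_{\omega^{k_0}z_0}=\overline{\Psi_n(\alpha^{-k_0}z_0)}\,k_{\alpha^{n-k_0}z_0}$ and the product $\prod_{l=-k_0}^{\,n-k_0-1}\overline{\phi}(\alpha^{l}z_0)$ sweeps \emph{forward through} every zero index as soon as $n>n_0+k_0$, so $T^{n}k_{\omega^{k_0}z_0}=0$ eventually for every generator, for the full sequence of integers. Replacing your forward orbit by the backward one in this case closes the gap with no further change to your argument.
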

\begin{proof}
Indeed, if $\overline{\phi}(\alpha^{n_0}z_0)=0$ for some $n_0\in \mathbb{N}$ then let us take
$$
X_{0}=\textrm{linear span} \{k_{\omega^nz_0}(z)\,\,:\,\,n\in\mathbb{N}\}
$$
Since $\omega$ is an irrational rotation, the subset $\{\omega^nz_0\}$ has an accumulation point in $\mathbb{D}$, which implies that $X_0$ is dense. Let us fix $k_0\in \mathbb{N}$, if $n>n_0+k_0$ then
$$
T^n k_{\omega^{k_0}z_0}(z)=0.
$$
That is, $T^{n}$ converges pointwise to zero on $X_0$ for the full sequence of natural numbers. In particular $T^{n_k}$
converges pointwise to zero on $X_0$.

If $\overline{\phi}(\alpha^{n}z_0)\neq 0$ for all $n\in\mathbb{N}$ then we consider the subset
$$
X_{0}=\textrm{linear span}\{k_{\alpha^n z_0}(z)\,:\,n\geq 1\}.
$$
Again, since $\alpha$ is an irrational rotation, the subset $\{\alpha^n z_0\}$ has an accumulation point in $\mathbb{D}$, therefore the subset $X_0$ is  dense. On the other hand
\begin{eqnarray*}
T^{n_k}k_{\alpha^{n_0}z_0} (z)&=& \overline{\overline{\phi}(\alpha^{n_0}z_0)\cdots \overline{\phi}(
\alpha^{n_0+n_{k}-1}z_0)} k_{\alpha^{n_0+n_k}z_0}(z)\\
&=& \frac{\overline{\Phi_{n_k}(z_0)}\overline{\overline{\phi}(\alpha^{n_k}z_0)\cdots\overline{\phi}(\alpha^{n_0+n_k-1}z_0)} }{\overline{\overline{\phi}(z_0)\cdots \overline{\phi}(\alpha^{n_0-1}z_0)}} k_{\alpha^{n_0+n_k}z_0}(z),
\end{eqnarray*}

Thus,
$$
\|T^{n_k}k_{\alpha^{n_0}z_0} (z)\|\leq C |\Phi_{n_k}(z_0)|\|\phi\|_{\infty}^{n_0}.
$$
We obtain again that $T^{n_k}$ converges pointwise to zero on $X_0$ as $k\to\infty$. This fact proves (1).

To show (2), we consider  the following subset
$$
Y_{0}=\textrm{linear span}\{k_{\omega^nz_1}(z)\,:\,n\geq 1\}
$$
which is clearly dense because the sequence $\omega^nz_1$ has an accumulation point in $\mathbb{D}$.
Next we will see that on $Y_0$ the following map 
$$
Sk_a(z)=\frac{1}{\overline{\overline{\phi}(\omega a)}} k_{\omega a}(z)
$$
is well defined, because $\Omega_{n_k}(z_1)\to \infty$ therefore we get that
$\overline{\phi}(\omega^nz_1)\neq 0$ for all $n$.

On the other hand
\begin{eqnarray*}
S^{n_k}k_{\omega^{m_0}z_1}&=&\frac{1}{\overline{\overline{\phi}(\omega^{m_0+1}z_1)}\cdots \overline{\overline{\phi}(\omega^{m_k+m_0}z_1)}}k_{\omega^{m_k+m_0}z_1}(z)\\ 
&=& \frac{\overline{\overline{\phi}(\omega z_1)}\cdots\overline{\overline{\phi}(\omega^{m_0}z_1)}}{\overline{\Omega_{n_k}(z_1)} \overline{\overline{\phi}(\omega^{m_k+1}z_1)}\cdots \overline{\overline{\phi}(\omega^{m_k+m_0}z_1)}},
\end{eqnarray*}
Therefore
$$
\|S^{n_k}k_{\omega^{m_0}z_1}\|\leq \frac{\|\varphi\|_\infty^{m_0}}{M^{m_0}|\Omega_{n_k}(z_1)|}\to 0 
$$
as $k\to \infty$, as we wanted.
\end{proof}

From the above results, now we wish to obtain some geometrical sufficient conditions to guarantee hypercyclicity. Since we will apply the Hypercyclicity Criterion, we will look sufficient conditions to obtain separately the conditions of the Hypercyclicity Criterion. We will say that $T$ satisfies the condition $X_0$ (respectively $Y_0$) if $T$ satisfies the condition (1) (respectively (2)) of the Hypercyclicity Criterion. 

\begin{proposition}
\label{sufficient}
Assume that $T=R_\lambda \phi(B)$ is an extended $\lambda$-eingenoperator of $B$. Then
\begin{enumerate}
\item If $\phi(z_0)=0$ then $T$ satisfies the condition $X_0$ in the Hypercyclicity Criterion for the full sequence of natural numbers.
\item If $|\phi(0)|<1$ then  $T$ satisfies the condition $X_0$ is satisfied for the full sequence of natural numbers.
\item If $|\phi(0)|>1$ then $T$ satisfies the condition $Y_0$ for the full sequence of natural numbers.
\item If $|\phi(0)|=1$ and $\lambda$ is a root of the unity then $T$ satisfies conditions $X_0$ and $Y_0$ for the full sequence $\mathbb{N}$.
\end{enumerate}
\end{proposition}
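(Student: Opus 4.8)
The plan is to argue throughout with the reproducing kernels $k_a$, using the identity $T^n k_a=\overline{\Psi_n(a)}\,k_{\alpha^n a}$ from the proof of Proposition~\ref{unique} (where $\alpha=\overline\lambda$ and $\overline{\Psi_n(a)}=\prod_{j=0}^{n-1}\phi(\lambda^j\overline a)$), the norm identity $\|k_b\|=(1-|b|^2)^{-1/2}$, and the density fact that $\lspan\{k_a:a\in U\}$ is dense in $H^2(\mathbb D)$ for every non-empty open $U\subseteq\mathbb D$ (a vector orthogonal to it vanishes on $U$, hence on $\mathbb D$). I take $|\lambda|=1$ as the main case, which is the one relevant to hypercyclicity; for $|\lambda|<1$ the operator is not hypercyclic, and the argument for (2) below applies verbatim.

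For (2): using $|\phi(0)|<1$, choose $\delta>0$ with $|\phi|<1$ on $\overline{D(0,\delta)}$ and put $X_0:=\lspan\{k_a:|a|<\delta\}$, which is dense. For $|a|<\delta$ every point $\lambda^j\overline a$ lies on the circle $\{|w|=|a|\}\subseteq\overline{D(0,\delta)}$, so $M_a:=\max_{|w|=|a|}|\phi(w)|<1$, whence $\|T^n k_a\|=\big(\prod_{j=0}^{n-1}|\phi(\lambda^j\overline a)|\big)\|k_{\alpha^n a}\|\le M_a^{\,n}\|k_a\|\to0$; by linearity $T^n\to0$ pointwise on $X_0$ along the full sequence. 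For (3): using $|\phi(0)|>1$, choose $\delta>0$ with $|\phi|>1$ on $\overline{D(0,\delta)}$, put $Y_0:=\lspan\{k_a:|a|<\delta\}$, and on it define the right inverse $Sk_a:=\phi(\overline{\omega a})^{-1}k_{\omega a}$ with $\omega=1/\overline\lambda$; since $|\omega a|=|a|<\delta$ this is well defined and $TSk_a=k_a$, so $T^nS^n=\mathrm{Id}$ on $Y_0$, while with $m_a:=\min_{|w|=|a|}|\phi(w)|>1$ one has $\|S^n k_a\|=\big(\prod_{j=1}^{n}|\phi(\overline{\omega^j a})|\big)^{-1}\|k_a\|\le m_a^{-n}\|k_a\|\to0$; this gives condition $Y_0$ along the full sequence.

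For (1), let $\phi(w_0)=0$; if $w_0=0$ then $|\phi(0)|=0<1$ and (2) applies, so assume $w_0\ne0$. If $\lambda$ is an irrational rotation, the first factor of $\overline{\Psi_n(\overline{w_0})}$ equals $\phi(w_0)=0$, so $\overline{\Psi_n(\overline{w_0})}=0$ for all $n$, and Proposition~\ref{unique}(1) applied at the point $\overline{w_0}$ and along the full sequence (this is its sub-case $\overline\phi(\alpha^{n_0}z_0)=0$, with $n_0=0$) produces a dense $X_0$ on which $T^n\to0$; here $\lspan\{k_{\omega^n\overline{w_0}}\}$ is dense because $\{\omega^n\overline{w_0}\}$ accumulates in $\mathbb D$. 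If instead $\lambda^p=1$, recall that $T^p=\Phi(B)$ with $\Phi(z)=\phi(z)\phi(\lambda z)\cdots\phi(\lambda^{p-1}z)$, so $T^pk_a=\Phi(\overline a)\,k_a$; since $\Phi(w_0)=0$, the set $V:=\{a\in\mathbb D:|\Phi(\overline a)|<1\}$ is non-empty and open, $X_0:=\lspan\{k_a:a\in V\}$ is dense, and writing $n=pm+r$ with $0\le r<p$ gives $\|T^n k_a\|=|\Phi(\overline a)|^{m}\|T^r k_a\|\le|\Phi(\overline a)|^{m}\max_{0\le r<p}\|T^r k_a\|\to0$, which is condition $X_0$ for the full sequence.

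For (4), let $\lambda^p=1$ and $|\phi(0)|=1$. Then $\Phi(0)=\phi(0)^p$ is unimodular, $\Phi(\lambda z)=\Phi(z)$, and $T^pk_a=\Phi(\overline a)\,k_a$, so the sets $\{a:|\Phi(\overline a)|<1\}$ and $\{a:|\Phi(\overline a)|>1\}$ are invariant under $a\mapsto\omega a$. Provided $\Phi$ is non-constant (the degenerate case where $\Phi$ is a constant of modulus one, so that $T^p$ is a scalar, must be excluded, and the statement does fail there), the open mapping theorem makes $\Phi(\mathbb D)$ an open neighbourhood of the boundary point $\Phi(0)$, so it meets both $\mathbb D$ and $\overline{\mathbb D}^{\,c}$ and both sets above are non-empty and open. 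Then the argument of (1) on the first set gives condition $X_0$, and the argument of (3) on the second set (now grouping the kernel products into blocks of length $p$, each contributing a factor $|\Phi(\overline a)|>1$, and using that every $\phi(\lambda^j\overline a)$ is non-zero there) gives condition $Y_0$, both for the full sequence $\mathbb N$. I expect the crux to be exactly this last case: one must see that $\lambda$ a root of unity is what puts $T^p=\Phi(B)$ into the commutant of $B$ and makes a Godefroy--Shapiro argument (Theorem~\ref{spectral}) available, and that $\Phi$ must be non-constant for that argument to bite; the passage from $T^p$ back to $T$ along all of $\mathbb N$ is then routine, via $\|T^{pm+r}\|\le\|T^r\|\,\|(T^p)^m\|$.
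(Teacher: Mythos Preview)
Your proof is correct and follows essentially the same route as the paper: reproducing kernels throughout, a small disk around the origin for (2) and (3), and for (4) the reduction $T^p=\Phi(B)$ combined with the open mapping theorem at the unimodular value $\Phi(0)$, exactly as the paper does. You are in fact more careful than the paper on two points: in (1) you handle the root-of-unity case separately (the paper's set $\lspan\{k_{\omega^n\overline{z_0}}\}$ is dense only when $\lambda$ is an irrational rotation), and in (4) you flag explicitly that $\Phi$ must be non-constant and you spell out the passage from the criterion for $T^p$ back to the full sequence for $T$, which the paper leaves to the reader via the Godefroy--Shapiro result.
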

\begin{proof}
Condition (1) appears in the proof of Proposition \ref{unique}. If $z_0=0$ and $\phi(0)=0$, then it is easy to verify that $T$ has dense generalized kernel. If $z_0\neq 0$, since $0=\phi(z_0)=\overline{\phi}(\overline{z_0})$ considering the following dense subset:
$$
X_0=\textrm{linear span}\{ k_{\omega^n\overline{z_0}}(z)\,:\,n\geq 1\},
$$
an easy check proves that $T$ converges pointwise to $0$ on $X_0$.

To show (2), by continuity there exists $\delta>0$ such that   $|\phi(z_0)|<1$ for every $z_0\in D(0,\delta)$. The result follows by considering the following dense subset:
$$
X_0=\textrm{linear span} \{k_{\overline{z_0}}(z)\,\,:\,\,z_0\in D(0,\delta)\}.
$$
In a similar way there exists $\delta>0$ such that for each $z_0\in D(0,\delta)$, $|\phi(z_0)|>1+\varepsilon$ for some $\varepsilon>0$. Now condition $(3)$ follows by considering:
$$
Y_0=\textrm{linear span}\{k_{\overline{z_0}}(z)\,;\,z_0\in D(0,\delta)\}.
$$
To show (4) we use the open mapping theorem. We know that if we consider $T^n$ then $T^n=\Phi(B)$ and $\Phi(z)=\phi(z)\cdot \phi(\lambda z)\cdots\phi(\lambda^{n-1}z)$ is analytic on $\mathbb{D}$. Since $|\Phi(0)|=1$ and $\Phi(0)$ is an interior point in the image $\Phi(\mathbb{D})$ we get that $\Phi(\mathbb{D})\cap \partial \mathbb{D}\neq \emptyset$, therefore by Godefroy and Shapiro's result we get that $T$ satisfies the Hypercyclicity Criterion for the full sequence of natural numbers. 
\end{proof}

\subsection{Some sufficient conditions of hypercyclicity of the elements  of the $\lambda$-commutant of the Hardy backward shift.}

In this subsection we will analyze the case $\lambda\in \partial \mathbb{D}$ and $\lambda$ an irrational rotation.

Now, let us select some of the ideas used in  Proposition 6.1 in \cite{BGSR}.

Clearly, can select  $r_n<1$ such that $r_n\to 1$ and $\varepsilon_n>0$ such that for 
 all $z$ satisfying $r_n-\varepsilon_n<|z|<r+\varepsilon_n$  we can get  $\phi(z)\neq 0$. Let us denote by $C_n=\{z\,:\,r_n-\varepsilon_n<|z|<r_n+\varepsilon_n\}$.
\begin{lemma}
\label{lemma}
Let us fix an increasing subsequence $\{n_k\}\subset \mathbb{N}$, and let us assume that for any $z_0$, satisfying $|z_0|=r_n$, the family $\mathcal{G}=\{\Omega_{n_k}\}$ is normal at $z_0$ and pointwise bounded on $z_0$. Then $\mathcal{G}$ is uniformly bounded on compact subsets of $\mathbb{D}$.
\end{lemma}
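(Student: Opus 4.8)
The plan is to promote the pointwise/local information available on the circles $\{|z|=r_n\}$ to a bound on all of $\mathbb{D}$ by combining a normal–families argument with the maximum modulus principle, using that these circles exhaust $\mathbb{D}$ as $r_n\to1$. I read the hypothesis as: for every $n$ and every $z_0$ with $|z_0|=r_n$, the family $\mathcal{G}=\{\Omega_{n_k}\}$ is normal at $z_0$ (in the spherical sense underlying Theorem~\ref{montel}) and $\sup_k|\Omega_{n_k}(z_0)|<\infty$. First I would record the one structural fact needed: since $\lambda\in\partial\mathbb{D}$ we have $|\omega|=1$, so each map $z\mapsto\overline{\phi}(\omega^j z)$ lies in $H^\infty(\mathbb{D})$ with norm $\|\phi\|_\infty$; hence every $\Omega_{n_k}$ is holomorphic on $\mathbb{D}$ and, in particular, continuous on each closed disk $\overline{D(0,r_n)}$, which is what makes the maximum modulus principle applicable. (The non-vanishing annuli $C_n$ are not needed for this lemma; they are recorded only for use in the results that follow.)

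The key step is to show that normality together with pointwise boundedness at a single point $z_0$ forces $\mathcal{G}$ to be uniformly bounded on a whole neighborhood of $z_0$. Suppose not; then there are $w_l\to z_0$ and $g_l\in\mathcal{G}$ with $|g_l(w_l)|\to\infty$. Since each fixed $\Omega_{n_k}$ is continuous at $z_0$, no function can occur infinitely often among the $g_l$, so we may assume $g_l=\Omega_{n_{k_l}}$ with $k_l$ strictly increasing. By normality at $z_0$, after passing to a subsequence, $g_l$ converges spherically, uniformly on compact subsets of some neighborhood $U\ni z_0$, either to a holomorphic $g\colon U\to\mathbb{C}$ or to $\infty$. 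In the first case $g$ is bounded on some $\overline{D(z_0,\rho)}\subset U$, and spherical convergence to a bounded continuous limit on a compact set is automatically Euclidean-uniform there, so $\sup_l\sup_{\overline{D(z_0,\rho)}}|g_l|<\infty$, contradicting $|g_l(w_l)|\to\infty$ with $w_l\to z_0$. In the second case $|g_l(z_0)|\to\infty$, contradicting $\sup_k|\Omega_{n_k}(z_0)|<\infty$. This establishes the neighborhood bound.

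From here the argument is routine. Each circle $\gamma_n=\{|z|=r_n\}$ is compact, so finitely many of the neighborhoods produced in the previous step cover it, yielding a constant $M_n<\infty$ with $|\Omega_{n_k}(z)|\le M_n$ for all $k$ and all $z$ with $|z|=r_n$. Since $\Omega_{n_k}$ is holomorphic on $\mathbb{D}$ and continuous on $\overline{D(0,r_n)}$, the maximum modulus principle gives $\sup_{|z|\le r_n}|\Omega_{n_k}(z)|\le M_n$ for every $k$. Finally, given a compact $K\subset\mathbb{D}$, choose $n$ with $K\subset\overline{D(0,r_n)}$ (possible since $r_n\to1$); then $\sup_K|\Omega_{n_k}|\le M_n$ for all $k$, which is exactly the conclusion of the lemma.

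I expect the only genuinely delicate point to be the middle step: the bookkeeping that rules out a holomorphic limit which is nevertheless locally unbounded (impossible) and a limit identically $\infty$ (excluded by pointwise boundedness at $z_0$), together with the reduction to strictly increasing indices. The compactness of $\gamma_n$, the maximum modulus step, and the exhaustion $r_n\to1$ are then straightforward.
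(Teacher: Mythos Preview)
Your argument is correct, but it diverges from the paper's in an interesting way. You use the hypothesis at \emph{every} point of the circle $|z|=r_n$: normality plus pointwise boundedness yields a uniformly bounding neighborhood at each such point, and then compactness of the circle patches these into a bound on all of $\partial D(0,r_n)$. The paper instead uses the hypothesis at a \emph{single} point $z_0$ on the circle, and then exploits the multiplicative structure of $\Omega_{n_k}$ together with the irrational rotation $\omega$: writing $\Omega_{n_k}(\omega^l z)=\Omega_{n_k}(z)\cdot\dfrac{\overline{\phi}(\omega^{n_k+1}z)\cdots\overline{\phi}(\omega^{n_k+l}z)}{\overline{\phi}(\omega z)\cdots\overline{\phi}(\omega^l z)}$, the bound on the single neighborhood $B_0$ is transferred to each rotate $\omega^l B_0$, and finitely many rotates cover the circle. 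This is precisely why the paper introduces the non-vanishing annuli $C_n$ beforehand (to control the denominator), whereas your remark that they are ``not needed for this lemma'' is true for your route but not for theirs. Your approach is more elementary and works for any normal family satisfying the stated hypothesis; the paper's approach uses more structure but only needs the hypothesis at one point per circle, and the multiplicative identity it isolates is reused verbatim in the proof of Theorem~\ref{main}. The final steps---maximum modulus on $\overline{D(0,r_n)}$ and exhaustion via $r_n\to 1$---coincide in both proofs. One small note: the paper simply asserts local uniform boundedness near $z_0$ from the hypothesis, while you supply the normal-families dichotomy argument that justifies it; that is a genuine addition in rigor on your part.
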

\begin{proof}
Indeed, let us fix $z_0\in C_n$ with $|z_0|=r_n$. By assumption there exists $\varepsilon>0$ such that $B_0=D(z_0,\varepsilon)\subset C_n$ and  $\mathcal{G}$ is uniformly bounded on $B_0$.  Let us shown that $\mathcal{G}$ is uniformly bounded on $|z|\leq r_n$. Indeed, since $\omega$ is an irrational rotation, by  compactness  there is an integer $n_0$  such that
$$\partial D(0,|z_0|)\subset B_0\cup \omega B_0\cup\cdots \cup\omega^n_0B_0.$$

Since $A=B_0\cup\cdots \cup \omega^{n_0}B_0$ is strictly contained in $\{z \,:\, r_0 <|z|<r_1 \}$ and $\phi$ does not vanish on $A$, let $C=\frac{\max\{|\phi(z)| : |z|=r_n\}}{\min\{|\phi(z)| : |z|=r_n\}}>0$.

 By using the modulus maximum principle and by showing that $\mathcal{G}$ is uniformly bounded on $A=B_0\cup \cdots \cup \omega^{n_0}B_0$, we get that  $\mathcal{G}$ is uniformly bounded on $D(0,|z_0|)$.

Indeed, since $\mathcal{G}$ is uniformly bounded on $B_0$, let $M$ such that $|\Omega_{n_k}(z)|<M$ for all $z\in B_0$ and $\Omega_{n_k}\in \mathcal{G}$. Let us show that $\mathcal{G}$ is uniformly bounded on each $\omega^lB_0$, $1\leq l\leq n_0$. Indeed each element in $\omega^lB_0$ have the form $\omega^lz$
with $z\in B_0$. Thus,
\begin{eqnarray*}
|\Omega_{n_k}(\omega^l z)|&= &|\overline{\phi}(\omega^{l+1}z)\cdots \overline{\phi}(\omega^{l+n_k}z)|\\
&=& \frac{|\Omega_{n_k}(z)| |\overline{\phi}(\omega^{n_k+1}z)\cdots \overline{\phi}(\omega^{n_k+l}z)|}{|\overline{\phi}(\omega z)\cdots \overline{\phi}(\omega^l z)|} \\
&\leq & M C^l,
\end{eqnarray*}
for all $\omega^l z\in \omega^l B_0$ and $\Omega_{n_k}\in\mathcal{G}$. Therefore $\mathcal{G}$ is uniformly bounded on $D(0,|z_0|)$. Since this argument is for any $r_n$ converging to $1$, we get that 
$\mathcal{G}$ is uniformly bounded on compact subsets of $\mathbb{D}$
as we wanted to show. 
\end{proof}

\begin{theorem}
\label{main}
Set $\lambda\in \partial \mathbb{D}$ an irrational rotation. If there exist a sequence $(n_k)$, a point $z_0\in \mathbb{D}$ such that $|\Omega_{n_k}(z_0)|>c>0$ and a point $z_1\in \mathbb{D}$ such that $\Psi_{n_k}(z_1)\to 0$, then $T=R_\lambda\phi(B)$ is hypercyclic.
\end{theorem}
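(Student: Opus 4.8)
The plan is to verify the Hypercyclicity Criterion along a well chosen subsequence of $(n_k)$. Write $\alpha=\overline{\lambda}$ and $\omega=1/\overline{\lambda}=\lambda$, both irrational rotations, and recall the elementary identities $T^{n}k_a=\overline{\Psi_n(a)}\,k_{\alpha^{n}a}$, $\Psi_n(\omega^{n}z)=\Omega_n(z)$ and $\alpha^{n}\omega^{n}=1$. The first (``$X_0$'') condition comes for free: since $\Psi_{n_k}(z_1)\to 0$, Proposition~\ref{unique}(1) already produces a dense subspace $X_0$, namely the linear span of reproducing kernels along the (dense) $\omega$-orbit of $\overline{z_1}$, on which $T^{n_k}\to 0$ pointwise.

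The real content is the second (``$Y_0$'') condition, and the first step is to reduce it to one geometric statement: \emph{it is enough to produce a point $b$ lying on one of the non-vanishing circles $\{|z|=r_n\}$ introduced before Lemma~\ref{lemma} (these exist because $\phi\not\equiv 0$ has only isolated zeros) together with a subsequence $(n_{k_j})$ for which $|\Omega_{n_{k_j}}(b)|\to\infty$.} Indeed, given such a $b$ one puts $Y_0=\textrm{linear span}\{k_{\omega^{m}b}\,:\,m\ge 1\}$, which is dense since $\{\omega^{m}b\}$ accumulates in $\mathbb{D}$, and $Sk_a=\frac{1}{\overline{\overline{\phi}(\omega a)}}k_{\omega a}$; this $S$ is well defined on $Y_0$ because $|\Omega_{n_{k_j}}(b)|\to\infty$ forces $\overline{\phi}$ not to vanish along the orbit of $b$. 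A direct computation with $\Psi_n(\omega^{n}z)=\Omega_n(z)$ and $\alpha^{n}\omega^{n}=1$ gives $T^{n}S^{n}=\mathrm{Id}$ on $Y_0$, and telescoping together with the fact that $b$ lies on a circle where $|\phi|$ is bounded below by some $\mu>0$ yields $\|S^{n_{k_j}}k_{\omega^{m}b}\|\le C_m\,|\Omega_{n_{k_j}}(b)|^{-1}\to 0$ for each fixed $m$. Hence the $Y_0$ condition holds and $T$ is hypercyclic by the Criterion.

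Everything therefore reduces to producing $b$, and this is where the hypotheses $|\Omega_{n_k}(z_0)|>c$ and $\Psi_{n_k}(z_1)\to 0$ and Lemma~\ref{lemma} enter. I would split into two cases according to the behaviour of $\mathcal{G}=\{\Omega_{n_k}\}$ near the non-vanishing circles. If $\mathcal{G}$ is \emph{not} uniformly bounded on compact subsets of $\mathbb{D}$, then by the contrapositive of Lemma~\ref{lemma} there is a circle $\{|z|=r_n\}$ on which $\mathcal{G}$ either fails to be pointwise bounded at some point $z^{\ast}$ — whence $b=z^{\ast}$ works directly — or fails to be normal at some point $z^{\ast}$; in the latter situation Montel's Theorem~\ref{montel} applied to a neighbourhood $U\subset C_n$ of $z^{\ast}$ makes $\bigcup_k\Omega_{n_k}(U)$ dense in $\mathbb{C}$, and since the $\Omega_{n_k}$ are zero-free on $C_n$ one passes to the reciprocals $1/\Omega_{n_k}$ and, by a Hurwitz/Vitali argument, upgrades the resulting moving points where $|\Omega_{n_k}|$ is large to a single point $b$ on a nearby non-vanishing circle with $|\Omega_{n_{k_j}}(b)|\to\infty$. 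In the remaining case $\mathcal{G}$ \emph{is} uniformly bounded on compact subsets of $\mathbb{D}$; extracting a subsequence $(m_j)$ with $\Omega_{m_j}\to g$ and $\Psi_{m_j}\to h$ locally uniformly and $\omega^{m_j}\to\zeta\in\partial\mathbb{D}$, the hypothesis $|\Omega_{n_k}(z_0)|>c$ gives $g\not\equiv 0$, while $\Psi_{m_j}(z)=\Omega_{m_j}(\alpha^{m_j}z)\to g(\overline{\zeta}z)$ forces $h(z)=g(\overline{\zeta}z)$ and hence $g(\overline{\zeta}z_1)=0$; I would then rule this case out altogether, using that $\Omega_{m_j+p}(z)\to g(z)\prod_{l=1}^{p}\overline{\phi}(\omega^{l}\zeta z)$ for each fixed $p$ and that on the non-vanishing circles the averages $\tfrac1p\sum_{l\le p}\log|\overline{\phi}(\omega^{l}\zeta z)|$ converge by unique ergodicity of the irrational rotation, which cannot be reconciled with $\{\Omega_{n_k}\}$ being bounded on compacta while $|\Omega_{n_k}(z_0)|>c$ and $\Psi_{n_k}(z_1)\to 0$.

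The reduction in the second paragraph and the ``not pointwise bounded'' branch are routine. The delicate points are the passage from moving points to a fixed $b$ in the non-normal branch and, above all, the exclusion of the last case, i.e.\ showing that $\{\Omega_{n_k}\}$ cannot be uniformly bounded on compact subsets of $\mathbb{D}$ under the two hypotheses. I expect this last step — which must combine the equidistribution/ergodic-averaging behaviour of $\log|\overline{\phi}|$ on the non-vanishing circles with the maximum-modulus/subharmonicity propagation already exploited in Lemma~\ref{lemma} and in the factorization theorem — to be the main obstacle and to occupy the bulk of the proof.
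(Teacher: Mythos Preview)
Your overall architecture matches the paper's: verify the Hypercyclicity Criterion along a subsequence of $(n_k)$, obtain the $X_0$ condition from Proposition~\ref{unique}(1), and handle the $Y_0$ condition by a case split on the family $\mathcal{G}=\{\Omega_{n_k}\}$. Two points, however, diverge from the paper, and the second is a genuine gap.

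In the non-normal branch you propose a ``Hurwitz/Vitali'' passage from moving large values to a fixed point $b$ with $|\Omega_{n_{k_j}}(b)|\to\infty$. This step is left vague and it is not clear how Hurwitz or Vitali would produce a \emph{fixed} point with an unbounded orbit from the mere density of $\bigcup_k\Omega_{n_k}(U)$. The paper avoids this entirely: once $\mathcal{G}$ is normal at \emph{no} point of some annulus $C_{n_0}$, Montel's Theorem~\ref{montel} shows that for every open $U\subset C_{n_0}$ the set $\bigcup_l\Omega_{n_l}(U)$ is dense in $\mathbb{C}$; this is precisely topological transitivity of the sequence $(\Omega_{n_l})$ viewed as maps $C_{n_0}\to\mathbb{C}$, and Birkhoff's Transitivity Theorem~\ref{birkhoff} (a Baire category argument) yields directly a point $z_2\in C_{n_0}$ whose orbit $\{\Omega_{n_l}(z_2)\}$ is dense in $\mathbb{C}$. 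One then simply extracts a subsequence along which $\Omega_{r_k}(z_2)\to\infty$ and feeds this into Proposition~\ref{unique}(2).

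The real issue is your treatment of the ``uniformly bounded'' case. You reach $g\not\equiv 0$ (from $|\Omega_{n_k}(z_0)|>c$) and the single zero $g(\overline{\zeta}z_1)=0$, and then propose to rule out this case via ergodic averaging of $\log|\overline{\phi}|$ along irrational rotations --- a step you yourself flag as the main obstacle. This machinery is unnecessary. The key observation you are missing is that the telescoping trick of Lemma~\ref{lemma} (which you already invoked implicitly for the $X_0$ condition) gives not just $\Psi_{n_k}(z_1)\to 0$ but $\Psi_{n_k}(\alpha^{l}z_1)\to 0$ for \emph{every} $l\ge 1$: when $\phi$ does not vanish on $|z|=|z_1|$, one has
\[
|\Psi_{n_k}(\alpha^{l}z_1)|=\frac{|\Psi_{n_k}(z_1)|\,|\overline{\phi}(\alpha^{n_k}z_1)\cdots\overline{\phi}(\alpha^{n_k+l-1}z_1)|}{|\overline{\phi}(z_1)\cdots\overline{\phi}(\alpha^{l-1}z_1)|}\le C^{l}\,|\Psi_{n_k}(z_1)|\to 0.
\]
Passing to the subsequence $(r_k)\subset(n_k)$ along which $\Psi_{r_k}(z)\to g(\mu z)$ locally uniformly, one obtains $g(\mu\alpha^{l}z_1)=0$ for all $l$. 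Since $\alpha$ is an irrational rotation, the set $\{\mu\alpha^{l}z_1:l\ge 1\}$ accumulates in $\mathbb{D}$, so the identity theorem forces $g\equiv 0$, contradicting $|g(z_0)|\ge c>0$. That is the entire argument --- no unique ergodicity, no averaging of $\log|\overline{\phi}|$, just the identity theorem for analytic functions.
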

\begin{proof}

First of all,  if $\phi$ vanishes on $\mathbb{D}$, the by applying we get that $T$ satisfies condition $X_0$ for the full sequence of natural numbers.

Let us assume that $\phi$ does not vanishes on $\mathbb{D}$. By hyphotesis
Using the same trick as in Lemma \ref{lemma}, we can get that $\Psi_{n_k}(\alpha^l z_1)\to 0$. If $|z_1|=r$ then we denote by $C_r$ an annulus like in Lemma \ref{lemma}. And let us denote $C=\frac{\max\{|\phi(z)| : z\in C_r\}}{\min\{|\phi(z)| : z\in C_r|\}}>0$ then:
\begin{eqnarray*}
|\Psi_{n_k}(\alpha^l z_1)|&= &|\overline{\phi}(\alpha^{l}z_1)\cdots \overline{\phi}(\alpha^{l+n_k-1}z_1)|\\
&=& \frac{|\Psi_{n_k}(z_1)| |\overline{\phi}(\alpha^{n_k}z_1)\cdots \overline{\phi}(\alpha^{n_k+l-1}z_1)|}{|\overline{\phi}(z_1)\cdots \overline{\phi}(\alpha^{l-1} z_1)|} \\
&\leq &|\Psi_{n_k}(z_1)| C^l\to 0.
\end{eqnarray*}
Then by applying Proposition \ref{unique} (1), the operator $T=R_\lambda\phi(B)$ satisfies condition $X_0$ of the Hypercyclicity Criterion for the sequence of natural numbers $(n_k)$.
Therefore, in both cases we obtain that $T$ satisfies condition $X_0$ for the sequence $(n_k)$.

We set  $\mathcal{G}=\{\Omega_{n_k}\}$.
Let us consider the annulus $C_n$ of the previous lemma. Then we have two possibilities:

{\it Case 1. } The family $\mathcal{G}$ is normal at no point $z\in C_{n_0}$ for some $n_0$. Then  by using Montel's Theorem (Theorem \ref{montel}) we have that $\bigcup_{l}\Omega_{n_l} (C_{n_0})$ is dense in $\mathbb{C}$. Since $C_{n_0}$ is homemorphic to a complete metric space, using Birkhoff's transitivity Theorem (Theorem \ref{birkhoff}) there exists  $z_2\in C_{n_0}$ such that
$\{\Omega_{n_l}(z_2)\}_{l\geq 1}$ is dense in $\mathbb{C}$. In particular, there exists a subsequence $\{r_k\}\subset \{n_k\}$ such that $\Omega_{r_k}(z_2)\to \infty$. On the other hand, since $z_2\in C_{n_0}$, we obtain that 
$|\phi(\omega^nz_2)|\geq \min_{C_{n_0}}|\phi(z)|>0$. Thus, the conditions of Proposition \ref{unique} (2) are satisfied. Hence, $T$ satisfies the condition $Y_0$ of the Hypercyclicity Criterion  for the subsequence $(r_k)$.

As a consequence, the conditions $X_0$ and $Y_0$ or the Hypercyclicity Criterion are satisfy for the sequence $(r_k)$, therefore $T$ is hypercyclic.

{\it Case 2. } For each $n_0$ there exists a point $z_2$ such that $\mathcal{G}$ is normal at $z_2$. We have two possibilities again. If the orbit $\{\Omega_{n_l}(z_2)\}_{l\geq 1}$ is unbounded, then there exist a subsequence $(r_k)\subset \{n_k\}$ such that $\Omega_{r_k}(z_2)\to\infty$.
In a similar way, using Proposition \ref{unique} (2) we obtain that $T$ satisfies the condition $Y_0$ of the Hypercyclicity Criterion for the sequence $(r_k)$.  Thus, $T$ satisfies the Hypercyclicity Criterion for the sequence $(r_k)$, which we wanted to prove. 

Now we suppose that for any subsequence $(n_k)$ the family $\mathcal{G}=\{\Omega_{n_k}\}$ is normal at $z_0$ and $\{\Omega_{n_k}(z_0)\}$ is bounded, then by applying Lemma \ref{lemma} we get that $\Omega_{n_k}(z)$ is uniformly bounded on compact subsets of $\mathbb{D}$. Therefore, by Montel's theorem, there exist a subsequence $(r_k)$ and an analytic function on  $\mathbb{D}$,  $g$, such that $\Omega_{r_k}\to g$ uniformly on compact subsets of $\mathbb{D}$.

Since $\Omega_n(z)=\Psi_n(\omega^nz)$, we obtain also that $\Psi_{n_l}(z)$ is uniformly bounded on compact subsets of $\mathbb{D}$. Moreover, extracting a subsequence if it is necessary, there exist a subsequence $\{r_k\}\subset\{n_k\}$, an analytic function $g\in H(\mathbb{D})$ and $\mu\in \partial\mathbb{D}$ such that  
$\Omega_{r_k}(z)\to g(z)$ and $\Psi_{r_k}(z)\to g(\mu z)$ uniformly on compact subsets of $\mathbb{D}$.

Since $\Psi_{n_k}(\alpha^l z_1)\to 0$ for all $l\geq 1$, we get that $g=0$.
On the other hand, since $|\Omega_{n_k}(z_0)|>c$, we get that $g(z_0)\neq 0$. A contradiction. Therefore, we don't have the situation that $\Omega_{n_k}$ is uniformly bounded on compact subsets of $\mathbb{D}$, and therefore $T$ is hypercyclic.
\end{proof}

\begin{corollary}
\label{corolario1}
Assume that $\lambda\in\partial \mathbb{D}$ is an irrational rotation. If $|\phi(0)|\geq 1$ and $\phi$ has a zero on $\mathbb{D}$ then $R_\lambda\phi(B)$ is hypercyclic.
\end{corollary}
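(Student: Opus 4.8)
The plan is to apply Theorem \ref{main} by verifying its two hypotheses: the existence of a sequence $(n_k)$ and points $z_0, z_1\in\mathbb{D}$ with $|\Omega_{n_k}(z_0)|>c>0$ and $\Psi_{n_k}(z_1)\to 0$. The zero of $\phi$ will give the second condition immediately, and the assumption $|\phi(0)|\geq 1$ will give the first by a maximum-modulus-type argument along the orbit $\{\omega^n z_0\}$.

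First I would handle the $\Psi$ side. Suppose $\phi(a)=0$ for some $a\in\mathbb{D}$. If $\phi$ vanishes identically the theorem (and indeed Proposition \ref{sufficient}(1)) already applies, so assume $\phi\not\equiv 0$. Setting $z_1=\overline{a}$ we have $\overline{\phi}(z_1)=\overline{\phi(a)}=0$, and hence $\Psi_n(z_1)=\overline{\phi}(z_1)\overline{\phi}(\alpha z_1)\cdots\overline{\phi}(\alpha^{n-1}z_1)=0$ for every $n\geq 1$. So $\Psi_{n_k}(z_1)\to 0$ trivially for any sequence $(n_k)$. (One should note that $z_1\in\mathbb{D}$ since $|z_1|=|a|<1$.)

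Next I would handle the $\Omega$ side with $z_0=0$. Here $\Omega_n(0)=\overline{\phi}(0)^{\,n+1}=\overline{\phi(0)}^{\,n+1}$, so $|\Omega_n(0)|=|\phi(0)|^{\,n+1}\geq 1$ for all $n$, because $|\phi(0)|\geq 1$. Thus $|\Omega_{n_k}(0)|>c$ with $c=\tfrac12$, say, works for any sequence. Therefore both hypotheses of Theorem \ref{main} are met (with the full sequence $(n_k)=\mathbb{N}$, or any subsequence), and $T=R_\lambda\phi(B)$ is hypercyclic.

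The main subtlety to get right is the degenerate/boundary bookkeeping rather than any deep estimate: one must observe that $0\in\mathbb{D}$ is a legitimate choice of $z_0$ for the $\Omega$-condition even though $\phi(0)\neq 0$, and that the zero $a$ of $\phi$ necessarily lies in $\mathbb{D}$ so that $\overline{a}\in\mathbb{D}$ is a legitimate $z_1$; and one must separately dispatch the case $\phi\equiv 0$, which cannot happen simultaneously with $|\phi(0)|\geq 1$ anyway, so in fact the hypothesis $|\phi(0)|\geq 1$ already forces $\phi\not\equiv 0$ and the two conditions $\Psi_{n}(z_1)\equiv 0$ and $|\Omega_n(0)|\geq 1$ hold cleanly. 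After that, the conclusion is an immediate citation of Theorem \ref{main}. I expect no real obstacle here — this corollary is essentially a convenient repackaging of the main theorem for an easily checkable hypothesis on $\phi$.
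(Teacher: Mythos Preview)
Your proposal is correct and follows essentially the same route as the paper: verify the two hypotheses of Theorem \ref{main} using $z_0=0$ for the $\Omega$-bound (from $|\phi(0)|\geq 1$) and $z_1=\overline{a}$ for the $\Psi$-vanishing (from $\phi(a)=0$). One harmless slip: with the paper's convention $\Omega_n(z)=\overline{\phi}(\omega z)\cdots\overline{\phi}(\omega^n z)$ there are $n$ factors, so $|\Omega_n(0)|=|\phi(0)|^{n}$ rather than $|\phi(0)|^{n+1}$; this does not affect the argument.
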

\begin{proof}
Indeed, if $\varphi(a)=0$ for some $a\in \mathbb{D}$ then by applying Proposition \ref{sufficient} (1) we get that $T=R_\lambda \phi(B)$ satisfies condition $X_0$ in the Hypercyclicity Criterion for the full sequence of natural numbers.
If $|\phi(0)|\geq 1$ then $|\Omega_n(0)|=|\phi(0)|^n\geq 1$, therefore by appying Theorem \ref{main}, we get that $T$ is hypercyclic.
\end{proof}
Moreover we can obtain the same result by replacing  the existence of a zero of $\phi$ by pointwise convergence to zero for some subsequence.

\begin{corollary}
\label{corolariocaso}
Assume that $\lambda\in\partial \mathbb{D}$ is an irrational rotation. If $|\phi(0)|\geq 1$ and there exists a subsequence $(n_k)$ such that $\Psi_{n_k}(z_0)\to 0$ for some $z_0\in \mathbb{D}$ then $R_\lambda\phi(B)$ is hypercyclic.
\end{corollary}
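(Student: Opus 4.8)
The plan is to read this off directly from Theorem \ref{main}: the hypothesis $|\phi(0)|\ge 1$ is exactly what is needed to make the $\Omega$-part of the hypothesis of that theorem automatic, at the distinguished point $0$.

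First I would record the elementary observation that, with the convention $\overline{\phi}(z)=\overline{\phi(\overline z)}$, one has $\overline{\phi}(0)=\overline{\phi(0)}$ and hence $|\overline{\phi}(0)|=|\phi(0)|\ge 1$. Evaluating the product $\Omega_n(z)=\overline{\phi}(\omega z)\cdots\overline{\phi}(\omega^{n}z)$ at the point $0\in\mathbb{D}$ then gives $\Omega_n(0)=\overline{\phi}(0)^{n}$, so that $|\Omega_n(0)|=|\phi(0)|^{n}\ge 1$ for every $n$; in particular $|\Omega_{n_k}(0)|\ge 1$ along any subsequence $(n_k)$.

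Next I would match this with the hypotheses of Theorem \ref{main}. Take the subsequence $(n_k)$ provided by the statement, for which $\Psi_{n_k}(z_0)\to 0$; this supplies the second hypothesis of Theorem \ref{main}, with $z_0$ playing the role of the point called $z_1$ there. For the first hypothesis, I would use the very same subsequence together with the point $0$ and the constant $c=1$, which is legitimate by the previous paragraph (and note there is no conflict with $z_0$, since the point witnessing $|\Omega_{n_k}|>c$ may be chosen independently). Theorem \ref{main} then applies and gives that $T=R_\lambda\phi(B)$ is hypercyclic. I do not expect any genuine obstacle here: the entire analytic content—normal families, Montel, Birkhoff transitivity, and the Hypercyclicity Criterion bookkeeping—already lives in Theorem \ref{main}, and this corollary merely packages the convenient configuration in which $|\Omega_{n_k}(0)|$ is forced to be bounded below.
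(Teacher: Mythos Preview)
Your proposal is correct and follows exactly the route the paper intends: the paper states this corollary without proof, merely noting that it is the same argument as the preceding Corollary~\ref{corolario1} with the zero of $\phi$ replaced by the pointwise condition $\Psi_{n_k}(z_0)\to 0$, and in that earlier proof the key line is precisely $|\Omega_n(0)|=|\phi(0)|^n\ge 1$ followed by an appeal to Theorem~\ref{main}. One cosmetic quibble: since Theorem~\ref{main} asks for $|\Omega_{n_k}(z_0)|>c>0$ with a strict inequality, in the boundary case $|\phi(0)|=1$ you should take, say, $c=\tfrac12$ rather than $c=1$.
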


\begin{example}
For $0< p<1$ let us consider the family of automorphisms $\varphi_p(z)$ defined by 
$$
\varphi_p(z)=\frac{p-z}{1-pz}.
$$
And we consider the family of bounded analytic functions $\psi_p(z)=\varphi_p(z)+1-p$. We see that $\psi_p(0)=1$ and $\phi_p(z)$ vanished on $\frac{2p-1}{p^2-p+1}\in \mathbb{D}$. Thus, according to Corollary \ref{corolariocaso} we get that $R_\lambda\psi_{p}(B)$ is hypercyclic for all $0< p<1$ and for all $\lambda\in\partial \mathbb{D}$.
 \end{example}

If $\lambda$ is an irrational rotation and   $|\phi(0)|<1$ in many cases, we can obtain supercyclicity.

\begin{corollary}
\label{case2}
Set $\lambda\in \partial \mathbb{D}$ an irrational rotation. If  $|\phi(0)|<1$ and $\phi$ has a zero on $\mathbb{D}$. If $T$ is not hypercyclic, then $T$ is supercyclic.
\end{corollary}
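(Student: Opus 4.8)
The plan is to prove the sharper, unconditional statement that $T=R_\lambda\phi(B)$ is supercyclic whenever $\lambda\in\partial\mathbb{D}$ is an irrational rotation and $\phi\in H^\infty(\mathbb{D})$ is non-zero with a zero in $\mathbb{D}$; the extra hypotheses ``$|\phi(0)|<1$'' and ``$T$ not hypercyclic'' only serve to place the conclusion inside the dichotomy of this section (recall that by Corollary~\ref{corolario1} a zero of $\phi$ together with $|\phi(0)|\ge 1$ already forces hypercyclicity). The tool is the Supercyclicity Criterion, i.e. the scalar-multiple analogue of the criterion of Section~\ref{tools}: it suffices to produce dense subsets $X_0,Y_0\subset H^2(\mathbb{D})$, an increasing sequence $(n_k)$, and maps $S_{n_k}:Y_0\to H^2(\mathbb{D})$ with $T^{n_k}S_{n_k}y\to y$ for $y\in Y_0$ and $\|T^{n_k}x\|\,\|S_{n_k}y\|\to 0$ for all $x\in X_0$, $y\in Y_0$.

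For $X_0$ I would reuse the construction of Proposition~\ref{sufficient}(1). If $\phi(w_0)=0$ with $w_0\in\mathbb{D}\setminus\{0\}$, set $X_0=\mathrm{span}\{k_{\omega^{n}\overline{w_0}}:n\ge 0\}$; this is dense because $\omega$ is an irrational rotation, so $\{\omega^{n}\overline{w_0}\}$ accumulates inside $\mathbb{D}$ and is a uniqueness set for $H^2(\mathbb{D})$. Using $T^{n}k_b=\overline{\Psi_{n}(b)}\,k_{\alpha^{n}b}$ and $\alpha\omega=1$, one sees that $\Psi_{n}(\omega^{m}\overline{w_0})$ contains the vanishing factor $\overline{\phi}(\overline{w_0})=0$ as soon as $n>m$, so $T^{n}$ is the zero operator on each generator of $X_0$ for $n$ large; in particular $\|T^{n}x\|=0$ eventually for each fixed $x\in X_0$. (If $w_0=0$ this is replaced by the equally trivial remark that $\phi(0)=0$ puts the polynomials inside $\bigcup_n\ker T^{n}$.) Thus condition $X_0$ of the criterion holds for the full sequence, in a particularly strong form.

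For $Y_0$ I would pick $z_1\in\mathbb{D}\setminus\{0\}$ whose modulus avoids the (at most countable) set of moduli of zeros of $\phi$ — possible since $\phi\not\equiv 0$ — so that $\overline{\phi}$ does not vanish on the circle $\{|z|=|z_1|\}$, and set $Y_0=\mathrm{span}\{k_{\omega^{n}z_1}:n\ge 0\}$, again dense. On $Y_0$ the map $Sk_a=\frac{1}{\overline{\overline{\phi}(\omega a)}}\,k_{\omega a}$ of Proposition~\ref{unique}(2) is well defined, and for $S_{n_k}:=S^{n_k}$ a direct computation from the same identities together with $\Omega_{n}(z)=\Psi_{n}(\omega^{n}z)$ gives
$$S^{\,n}k_{\omega^{m}z_1}=\frac{1}{\overline{\Omega_{n}(\omega^{m}z_1)}}\,k_{\omega^{n+m}z_1},\qquad T^{n}S^{\,n}=\mathrm{Id}\ \text{on } Y_0,$$
for every $n$. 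Taking $(n_k)$ to be the whole sequence of positive integers, the condition $T^{n_k}S_{n_k}y\to y$ then holds exactly on $Y_0$, while $\|T^{n}x\|\,\|S^{\,n}y\|\to 0$ holds trivially: for each fixed $x\in X_0$ the first factor is $0$ for all large $n$, no matter how $\|S^{\,n}y\|$ grows. Hence the Supercyclicity Criterion is satisfied and $T$ is supercyclic.

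The only point requiring care is the verification of $T^{n}S^{\,n}=\mathrm{Id}$ on $Y_0$ and the well-definedness of $S$ there, which depend on choosing $z_1$ on a circle free of zeros of $\phi$ and on the bookkeeping with $\Psi_{n},\Omega_{n}$ and $\alpha\omega=1$; the rest is soft. I would also emphasise why an \emph{honest} zero of $\phi$ is indispensable and cannot be weakened to $\Psi_{n_k}(z_1)\to 0$ (as in Corollary~\ref{corolariocaso}): with a true zero, $\|T^{n}x\|$ is \emph{exactly} $0$ on $X_0$ for large $n$, making the mixing condition automatic whatever $\|S^{\,n}y\|$ does, whereas with only convergence one would have to control the ratio $|\Psi_{n_k}(z_1)|/|\Omega_{n_k}(z_1)|$ — precisely the quantity whose possible non-decay obstructs hypercyclicity in the delicate case.
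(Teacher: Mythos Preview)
Your argument is correct, but it is not the paper's. The paper gives a one-line reduction: pick $c>0$ with $|c\phi(0)|\ge 1$, observe that $c\phi$ still has a zero in $\mathbb{D}$, and apply Corollary~\ref{corolario1} to conclude that $cT=R_\lambda(c\phi)(B)$ is hypercyclic; since a scalar multiple of $T$ is hypercyclic, $T$ itself is supercyclic. Thus the paper leverages the heavy machinery already built (Theorem~\ref{main}, Lemma~\ref{lemma}, Montel/Birkhoff) and the standard fact that hypercyclicity of $cT$ implies supercyclicity of $T$.

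Your route is genuinely different and more self-contained: you bypass Theorem~\ref{main} entirely and verify the Supercyclicity Criterion directly, the key point being that an \emph{actual} zero of $\phi$ forces $T^{n}x=0$ (not merely $\to 0$) on a dense set, so the mixed condition $\|T^{n}x\|\,\|S^{n}y\|\to 0$ is automatic no matter how badly $\|S^{n}y\|$ blows up. This is elegant, proves the unconditional statement (no need for ``$T$ not hypercyclic''), and also covers the borderline case $\phi(0)=0$ that the paper's rescaling trick does not literally handle (there is no $c$ with $|c\cdot 0|\ge 1$; one would have to invoke Proposition~\ref{supercyclic1} separately). On the other hand, the paper's argument is shorter and explains \emph{why} the hypothesis $|\phi(0)|<1$ is singled out: it is exactly the obstruction to applying Corollary~\ref{corolario1} directly.
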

\begin{proof}
Indeed, it is sufficient to choose a constant $c>0
$ such that $|c\phi(0)|\geq 1$. Then $T=R_\lambda c\phi(B)$, is an extended $\lambda$ eigenoperator of $B$ which satisfies the hypothesis of Theorem \ref{main}. Therefore $cT$ is hypercyclic, which implies that $T$ is supercyclic.
\end{proof}

We will separate the case $T=R_\lambda \phi(B)$ invertible and will now proceed to analyze the question according to values of $\phi$ at the origin.
In the next result, we will remove from the discussion the case in which the sequences $\{\Omega_n(z)\}$ or  $\{\Psi_n(z)\}$ are not uniformly bounded on compact subsets of  $\mathbb{D}$.  

\begin{theorem}
\label{main2}
Assume that $\lambda\in \partial \mathbb{D}$ an irrational rotation:
\begin{enumerate}
\item If $\{\Omega_n\}$ is not uniformly bounded on compact subsets and $|\phi(0)|<1$ for some $z_1\in \mathbb{D}$, then $T$ is hypercyclic
\item If $\{1/\Phi_n\}$ is not uniformly bounded on compact subsets and  $|\phi(0)|>1$, then $T$ is hypercyclic.
\end{enumerate}
\end{theorem}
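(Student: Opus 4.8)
The plan is to prove both parts by verifying the Hypercyclicity Criterion through its conditions $X_0$ and $Y_0$, in the spirit of Theorem \ref{main}: the hypothesis on $|\phi(0)|$ will supply one of the two conditions for the full sequence $\mathbb N$, while the failure of uniform boundedness on compacta plays the role that the assumptions ``$|\Omega_{n_k}(z_0)|>c$'' and ``$\Psi_{n_k}(z_1)\to 0$'' played in Theorem \ref{main}. In particular I would reduce (1) to a direct application of Theorem \ref{main} and prove (2) by the symmetric argument.

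For (1): since $|\phi(0)|<1$ we have $\Psi_n(0)=\overline{\phi(0)}^{\,n}\to 0$, so by Proposition \ref{sufficient}(2) the operator $T$ satisfies condition $X_0$ for the full sequence, hence along any subsequence extracted later; by Theorem \ref{main} it then suffices to produce a subsequence $(n_k)$ and a point $z_0\in\mathbb D$ with $|\Omega_{n_k}(z_0)|$ bounded below, and in fact I would get $\Omega_{n_k}(z_0)\to\infty$. Fix the annuli $C_m=\{\,r_m-\varepsilon_m<|z|<r_m+\varepsilon_m\,\}$ with $r_m\to1$ and $\varepsilon_m\to 0$, on which $\phi$ does not vanish (the construction preceding Lemma \ref{lemma}), and set $\mathcal{G}=\{\Omega_n\}$. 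If for some $m$ the family $\mathcal{G}$ is normal at no point of $C_m$, then exactly as in Case 1 of the proof of Theorem \ref{main}, Montel's Theorem \ref{montel} makes $\bigcup_n\Omega_n(C_m)$ dense in $\mathbb C$ and Birkhoff's Theorem \ref{birkhoff} yields $z_0\in C_m$ with dense orbit, hence a subsequence with $\Omega_{n_k}(z_0)\to\infty$. Otherwise, for each $m$ there is $z_m\in C_m$ at which $\mathcal{G}$ is normal: if some orbit $\{\Omega_n(z_m)\}_n$ is unbounded we pick $(n_k)$ with $\Omega_{n_k}(z_m)\to\infty$; if every such orbit is bounded, then running the spreading argument of Lemma \ref{lemma} at $z_m$ (legitimate since $\phi$ is bounded away from $0$ on $C_m$ and $\Omega_n$ is holomorphic on $\mathbb D$) shows that $\mathcal{G}$ is uniformly bounded on $D(0,|z_m|)$ for every $m$, hence on every compact subset of $\mathbb D$ since $|z_m|\to 1$ — contradicting the hypothesis. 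In the surviving cases Theorem \ref{main} gives that $T$ is hypercyclic.

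Part (2) is the dual statement. First, $|\phi(0)|>1$ forces $|\Omega_n(0)|=|\phi(0)|^{\,n}\to\infty$ with $\phi$ zero-free near $0$, so by Proposition \ref{sufficient}(3) the operator $T$ satisfies $Y_0$ for the full sequence. Next, from $\Psi_n(z)=\overline{\Phi_n(\overline z)}$ we get $|\Psi_n(z)|=|\Phi_n(\overline z)|$, so the hypothesis that $\{1/\Phi_n\}$ is not uniformly bounded on compacta says exactly that $\Psi_n$ comes arbitrarily close to $0$ on some compact subset of $\mathbb D$ along a subsequence. Running the argument of part (1) with the roles of ``$\to\infty$'' and ``$\to0$'' interchanged — applied now to the family $\{1/\Psi_n\}$, which is holomorphic on each zero-free annulus $C_m$ and obeys there the same quasi rotation-invariance, the correcting factor $\overline\phi(z)/\overline\phi(\alpha^n z)$ being bounded above and below on $C_m$ uniformly in $n$ — one obtains a fixed point $z_1\in\mathbb D$ and a subsequence $(n_k)$ with $\Psi_{n_k}(z_1)\to0$, the possibility that $\{1/\Psi_n\}$ stays uniformly bounded on compacta being excluded by the hypothesis. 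By Proposition \ref{unique}(1) this gives $X_0$ along $(n_k)$, and together with $Y_0$ the Hypercyclicity Criterion yields that $T$ is hypercyclic.

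The step I expect to be the main obstacle is the case analysis itself, together with the bookkeeping around normality and the zeros of $\phi$: one must extract, from the sole failure of uniform boundedness on compacta, either a zero-free annulus on which the pertinent family is nowhere normal — so that Montel and Birkhoff deliver a point with dense, hence (for (1)) unbounded or (for (2)) zero-approaching, orbit — or a single point with the required orbital behaviour, while genuinely ruling out that the family stays uniformly bounded on compacta. This rests on the spreading mechanism of Lemma \ref{lemma}, and in part (2) on checking that this mechanism transfers to the reciprocals $1/\Psi_n$ over the annuli where $\phi$ is zero-free; keeping the selected points on circles along which $\phi$ (hence $\Psi_n$ and $1/\Psi_n$) is bounded away from $0$ is the delicate part.
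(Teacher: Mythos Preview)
Your proposal is correct and follows essentially the same route as the paper: in both parts one condition of the Hypercyclicity Criterion comes for free from the size of $|\phi(0)|$ via Proposition \ref{sufficient}, and the other is obtained from the failure of uniform boundedness by the Montel/Birkhoff mechanism together with the rotation-spreading trick of Lemma \ref{lemma}. The only organizational difference is that you package part (1) as a reduction to Theorem \ref{main} and make the trichotomy (nowhere normal on some $C_m$ / normal with unbounded orbit / normal with bounded orbit, the last being ruled out by spreading) explicit, whereas the paper argues the $Y_0$ condition directly and states the dichotomy more loosely; your formulation is in fact a bit cleaner on this point.
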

\begin{proof}
 Let us to show 1).  Firstly, if $|\phi(0)|<1$ by applying Proposition \ref{sufficient} (2)  we get that $T$
 satisfies the condition $X_0$ of the Hypercyclicity Criterion for the full sequence of natural numbers.

Since $\{\Omega_n\}$ is not uniformly bounded on compact subsets of $\mathbb{D}$, then we have two possibilities:

a) The family $\mathcal{G}=\{\Omega_n\}$ is normal at no  point of $\mathbb{D}$. In this case by applying Theorems \ref{birkhoff} and \ref{montel}, there is a complex number $z_1\in \mathbb{D}$ with $\mathcal{G}$-orbit dense. In particular, we can select a subsequence $\{n_k\}$ such that $\Omega_{n_k}(z_1)\to \infty$. 
Moreover, using the same trick as in Lemma \ref{lemma}, we can get that $\Omega_{n_k}(\alpha^l z_1)\to \infty$. Therefore, the condition $Y_0$ of the hypercyclicity criterion is satisfies for the subsequence $(n_k)$. As a consequence $T$ satisfies the Hypercyclicity Criterion for the sequence $(n_k)$.

b) There exists a point $z_1\in \mathbb{D}$ such that
$\Omega_{n_k}(z_1)\to \infty$. In such a case, we get again that $T$ satisfies the Hypercyclicity Criterion for the sequence $(n_k)$, and therefore $T$
is hypercyclic as we wanted to show.

Part 2) run  similar. Again, condition $|\phi(0)|>1$ assert that condition $Y_0$ is satisfied for the full sequence of natural numbers. 

If  $\mathcal{G}'=\{(1/\Psi_n)\}$ is not uniformly bounded on compact subsets then we have two possibilities. a) The family $\{(1/\Psi_n)\}$ is normal at no  point of $\mathbb{D}$. In this case by applying Theorems \ref{birkhoff} and \ref{montel}, there is a complex number $z_1\in \mathbb{D}$ with $\mathcal{G}'$-orbit dense. In particular, we can select a subsequence $\{n_k\}$ such that $(1/\Psi_{n_k})(z_1)\to \infty$. And b), there exists a point $z_1\in \mathbb{D}$ such that
$(1/\Psi_{n_k})(z_1)\to \infty$.

In both cases, we can guarantee that $\Psi_{n_k}(z_1)\to 0$, and as before there exists a dense subsets $X_0$ such that $T^{n_k}x_0\to 0$ for all $x_0\in X_0$. Therefore the hypercyclicity criterion is satisfied, hence $T$ is hypercyclic.
\end{proof}

\begin{remark}
Let us observe that we can improve Theorem \ref{main2}
in such a form: If there exists a subsequence $(n_k)$ such that $\Psi_{n_k}(z_1)\to 0$ for some $z_1\in \mathbb{D}$ and the sequence $\{\Omega_{n_k}\}$ is not uniformly bounded on compact subsets, then $T$ is hypercyclic.

Analogously, we can prove that if for some subsequence $(n_k)$, $\Omega_{n_k}(z_1)\to \infty$ for some $z_1\in\mathbb{D}$ and $1/\Psi_{n_k}$ is not uniformly bounded on compact subsets, then $T$ is hypercyclic.
\end{remark}

\section{Supercyclicity of extended $\lambda$-eigenoperators of the backward shift.}

\label{supercyclicity}

In this section we study if and operator that $\lambda$-commute with the backward shift is supercyclic.
For element of the commutant, this result is true. Moreover, solving a question posed by Godefroy and Shapiro, V. M\"uller was able to prove that any non-scalar operator that commutes with a generalized backward shift is supercyclic (\cite{muller}). Let us recall that
 a bounded linear operator $\mathcal{B}$ on a Banach space
$X$ is a generalized backward shift if it satisfies the following conditions:
\begin{enumerate}
  \item The kernel of $\mathcal{B}$ is one dimensional.
\item $\bigcup\{\ker \mathcal{B}^n:\ n = 0, 1, 2,...\}$ is dense in $X$.
\end{enumerate}
For more information about generalized backward shift see \cite{GoSha}.

It is natural to consider an operator $T$ that $\lambda$-commute with a generalized backward shift $\mathcal{B}$ and to try to see if $T$ is supercyclic. We see that this property is not longer true for all elements of the $\lambda$-commutant of $\mathcal{B}$. Nonetheless the result is true for many elements in the $\lambda$-commutant.

\begin{proposition}
\label{supercyclic1}
Assume that $\mathcal{B}$ is a generalized backward shift on a Banach space $X$, and $A$ is a $\lambda$-extended eigenoperator of $\mathcal{B}$. If $\textrm{ker}(A)\supset \textrm{ker}(\mathcal{B})$ then $A$ is supercyclic.
\end{proposition}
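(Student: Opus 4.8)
The strategy is to put $A$ into a normal form on the canonical dense subspace attached to $\mathcal{B}$, read off that the forward orbits of $A$ on that subspace eventually vanish, build exact right inverses of the powers $A^{n}$ there, and then invoke the Supercyclicity Criterion. Throughout I would assume $\lambda\neq0$ (if $\lambda=0$ the relation $\mathcal{B}A=0$ forces $\operatorname{ran}A\subset\ker\mathcal{B}$, so $A$ has rank at most one and cannot be supercyclic on the infinite-dimensional space $X$). First I would recall the structure theorem for generalized backward shifts (see \cite{GoSha}): there is a sequence $(e_n)_{n\geq0}$ in $X$ with $\mathcal{B}e_0=0$, $\mathcal{B}e_n=e_{n-1}$ for $n\geq1$, $\ker\mathcal{B}^{n+1}=\lspan\{e_0,\dots,e_n\}$, and $E:=\lspan\{e_n:n\geq0\}$ dense in $X$; the $e_n$ are linearly independent because $\mathcal{B}^n e_n=e_0\neq0$ while $\mathcal{B}^n e_k=0$ for $k<n$.

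Next I would derive the normal form. Since $\ker A\supset\ker\mathcal{B}$ we have $Ae_0=0$. Iterating $\mathcal{B}A=\lambda A\mathcal{B}$ gives $\mathcal{B}^{k}A=\lambda^{k}A\mathcal{B}^{k}$, hence $\mathcal{B}^{n+1}Ae_n=\lambda^{n+1}A\mathcal{B}^{n+1}e_n=0$, so $Ae_n\in\ker\mathcal{B}^{n+1}$ and we may write $Ae_n=\sum_{k=0}^{n}a_{n,k}e_k$. Comparing the two sides of $\mathcal{B}Ae_n=\lambda Ae_{n-1}$ and using linear independence yields the recursion $a_{n,k}=\lambda a_{n-1,k-1}$, whence $a_{n,k}=\lambda^{k}c_{n-k}$ with $c_j:=a_{j,0}$; and $Ae_0=c_0e_0=0$ forces $c_0=0$. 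Since $A\neq0$, not all $c_j$ vanish, so there is a least $j_0\geq1$ with $c_{j_0}\neq0$. This gives the normal form
\begin{equation}
\label{normalform}
Ae_n=\sum_{k=0}^{n-j_0}\lambda^{k}c_{n-k}\,e_k\qquad(n\geq0),
\end{equation}
the sum being $0$ when $n<j_0$; in particular $A$ maps $\lspan\{e_0,\dots,e_m\}$ into $\lspan\{e_0,\dots,e_{m-j_0}\}$ (equivalently, on $E$ one has $A=Q\circ\psi(\mathcal{B})$ with $Qe_k=\lambda^{k}e_k$ and $\psi(w)=\sum_{j\geq j_0}c_jw^{j}$, a Banach-space counterpart of the factorization of Section~\ref{factorization}).

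From \eqref{normalform} I would extract two facts. First, iterating the displayed inclusion, $A^{n}$ maps $\lspan\{e_0,\dots,e_m\}$ into $\lspan\{e_0,\dots,e_{m-nj_0}\}$, which is $\{0\}$ once $nj_0>m$; thus for every $x\in E$ we have $A^{n}x=0$ for all large $n$. Second, each $A^{n}$ restricts to a linear bijection of $\lspan\{e_{nj_0+\ell}:\ell\geq0\}$ onto $E$: by \eqref{normalform} and induction on $n$ (using $A^{n}e_k\in\lspan\{e_0,\dots,e_{k-nj_0}\}$ to absorb lower-order terms) one gets $A^{n}e_{nj_0+\ell}=\gamma_{n,\ell}\,e_\ell+(\text{terms of index}<\ell)$ with $\gamma_{n,\ell}\neq0$, a triangular shape with nonvanishing diagonal, hence bijective. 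I would then let $S_n\colon E\to\lspan\{e_{nj_0+\ell}:\ell\geq0\}\subset E$ be the inverse of that restriction, so that $A^{n}S_n=\operatorname{id}_E$.

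Finally I would apply the Supercyclicity Criterion with $X_0=Y_0=E$, the full sequence $n_k=k$, and the (not necessarily continuous) maps $S_{n_k}=S_k$: condition $\|A^{k}x\|\,\|S_ky\|\to0$ for $x,y\in E$ holds trivially because $A^{k}x=0$ for all large $k$, and $A^{k}S_ky\to y$ holds because $A^{k}S_ky=y$ exactly; hence $A$ is supercyclic. The one genuinely delicate step is the derivation of \eqref{normalform} and the triangular bijectivity of $A^{n}$ on the ``upper'' subspace; invoking the structure theorem for generalized backward shifts and checking that $E$ is dense are standard, and once \eqref{normalform} is available the remainder is bookkeeping. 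Note that, unlike in M\"uller's theorem, we cannot merely argue that $A$ is itself a generalized backward shift: by \eqref{normalform} the kernel of $A$ contains $\lspan\{e_0,\dots,e_{j_0-1}\}$ and may well have dimension $j_0>1$.
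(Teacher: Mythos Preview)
Your proof is correct and follows the same overall architecture as the paper: both arguments pass to the Godefroy--Shapiro basis $(e_n)$, read off the upper-triangular normal form of $A$ from the intertwining relation, observe that the condition $\ker A\supset\ker\mathcal{B}$ forces the first nonzero coefficient to occur at some index $j_0\geq1$ so that $A^n$ is eventually zero on each finite span, construct a right inverse of $A^n$ on the dense subspace $E$, and finish with a criterion.

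The difference lies in how the criterion is fed. The paper makes the factorization $A=R_\lambda \mathcal{B}^{p}A_p$ explicit (with $A_p$ invertible on each $Y_{n+1}$), defines $C=A_p^{-1}F^{p}R_{1/\lambda}$, bounds $\|C^{n}\restriction Y_k\|$ by $(\sigma(k+(n-1)p))^{n}$, and then introduces a scaling sequence $r_n$ so that $r_nA^{n}\to0$ and $r_n^{-1}C^{n}\to0$ pointwise on $Y$; this is the Hypercyclicity Criterion applied to $(r_nA^{n})$, from which supercyclicity of $A$ follows. You instead invoke the Supercyclicity Criterion directly and note that the product condition $\|A^{k}x\|\,\|S_{k}y\|\to0$ is automatic because $A^{k}x=0$ for all large $k$; no estimate on $\|S_k y\|$ is needed at all. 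This is a genuine simplification over the paper's route and avoids the bookkeeping with $\sigma(n)$ and $r_n$. Conversely, the paper's explicit identification of the right inverse as $A_p^{-1}F^{p}R_{1/\lambda}$ makes the connection with the factorization $R_\lambda\phi(B)$ of Section~\ref{factorization} more transparent, and its norm control would be reusable if one wanted quantitative information; your triangular-bijectivity argument for $A^{n}\restriction\lspan\{e_{nj_0+\ell}:\ell\geq0\}$ achieves the same existence of $S_n$ with less machinery.
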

\begin{proof}
Following to Godefroy-Shapiro, if $\mathcal{B}$ is a generalized backward shift, then there exists a sequence of vectors $(x_k)_{n\geq 0}$ such that $\mathcal{B}x_{n}=x_{n-1}$ for $n\geq1$ and $\mathcal{B}x_0=0$.  Moreover, if $A$ commutes with $\mathcal{B}$ then the matrix of $A$ relative to the basis $(x_k)_{k\geq 0}$
is upper triangular and it is constant on each superdiganal. That is, $A$ can be represented as a formal power series of $\mathcal{B}$.

In a similar way, following \cite{Pet}, if $A$ is an  $\lambda$-extended eigenoperator of   $\mathcal{B}$ then $A$ has the following matrix representation with respect to the basis $(x_k)_{k\geq 0}$, namelly, the $(p+1)$ superdiagonal of $A$ has the form $(c_p\lambda^n)_{n\geq 0}$. That is, $Ax_n=c_nx_0+c_{n-1}\lambda x_1+\cdots+c_0\lambda^{n}x_n$.

Thus, if $p$ is the first $p$ for which $c_p\neq 0$ then
$A$ can be expresed as $A=R_{\lambda} B^pA_p$ where $A_p$ is the formal power series $\phi(\mathcal{B})=c_pI+c_{p+1}\mathcal{B}+c_{p+2}\mathcal{B}^2+\cdots$ and $R_\lambda x_n=\lambda^nx_n$. Moreover, if $\textrm{ker}(A)\supset \textrm{ker}(\mathcal{B})$ then $p\geq 1$.

Now, we mimics the proof of Proposition 3.6 in \cite{GoSha}.
If we denote by 
$$Y_{n+1}=\textrm{linear span}\{x_0,x_1,\cdots, x_n\}$$
for $n\geq 0$, then $Y_{n+1}$ is invariant under $A_p$ and $A_p$ is invertible on $Y_{n+1}$. Therefore $A_p$ is invertible on $Y=\textrm{linear span}\{x_n\,:\,n\geq 0\}$.
We consider
$$
C=A_p^{-1} F^p R_{1/\lambda},
$$
where $F$ is the generalized forward shift with respect to the basis $(x_k)_{k\geq 0}$.
Clearly $C$ maps $Y_n$ into $Y_{n+p}$ and $AC=I$ on $Y$.
Let us denote by $\sigma(n)$ the norm of $C$ restricted to $Y_n$, then if $y\in Y_k$ then
\begin{eqnarray*}
\|C^ny\|&=&\|CC^{n-1}y\|\\
&\leq & \sigma(k+(n-1)p) \|C^{n-1}y\|\\
&\leq& \sigma(k+(n-1)p)\sigma(k+(n-2)p)\cdots \sigma(k)\|y\|\\
&\leq & (\sigma(k+(n-1)p))^n \|y\|.
\end{eqnarray*}

By considering $r_n=n (\sigma(n+(n-1)p))^n$
and the sequences of operators $T_n=r_nA^n$ and $S_n=\frac{1}{r_n}C^n$
we see that $T_n$ acting on vectors of $Y_n$ is eventually zero, therefore
$T_n$ converges pointwise to zero on $Y$. On the other hand, for each $y\in Y_k$ we get that $T_nS_n=I$ on $Y$ and
$\|S_ny\|\leq \frac{1}{n}\|y\|$. Thus all requeriments of the hypercyclicity criterion are satisfied for $T_n=r_nA^n$, which implies that $A$ is supercyclic as we wanted to prove.
\end{proof}

\begin{remark}
Suprisingly enough, dropping the hypothesis $\textrm{ker}(A)\supset \textrm{ker}(\mathcal{B})$, we can not assert that an operator $A$ that $\lambda$-commute with a generalized backward shift is supercyclic. In fact we can find examples of extended $\lambda$ eigenoperators of $\mathcal{B}$ which are  not supercyclic. 
 That is, there is not a result analogous  to M\"uller's result for operators in the $\lambda$-commutant of a generalized backward shift. 
For example, following Chan and Shapiro (see \cite{chanshapiro}) we consider a {\it comparison} entire function $\gamma(z)=\sum_{n=0}^\infty \gamma_nz^n$,  $\gamma_n>0$ and $\frac{n\gamma_{n}}{\gamma_{n-1}}$ bounded. On such considerations the differentiation operator $D$ is bounded on the  the Hilbert space $E^2(\gamma)$ of entire functions $\sum a_kz^k$ satisfying
$\sum_{k=0} |a_k|^2 \gamma_n^{-2}<\infty$.  Moreover, clearly $D$ is a generalized backward shift on $E^2(\gamma)$.  If we consider $R_\lambda z^n=\lambda^nz^n$, then  $R_\lambda (I+D)$ with $|\lambda|=1$ and $\lambda$ a root of unity, is hypercyclic on $E^{\gamma}$. However if $|\lambda|<1$ by the results in \cite{GSR} we can obtain that $R_\lambda (I+D)$ is not supercyclic on $E^2(\gamma)$ for $|\lambda|<1$.

\end{remark}

Now we reduce our study to the supercyclicity of extended $\lambda$-eigenoperators of $B$, defined on the Hardy space $H^2(\mathbb{D})$. If $T$ is an extended eigenoperator of $B$ then
$T=R_\lambda \phi(B)$. Where $\phi(B)$ is a formal power series.

If $\phi(0)=0$ and $|\lambda|< 1$ then $T=R_\lambda\phi(B)$ by applying Proposition \ref{supercyclic1} we get that $T$ is supercyclic.

If $\phi(0)\neq 0$, we consider the operator $R_\lambda \frac{1}{\phi(0)}\phi(B)=R_\lambda \psi(B)$.  If $\lambda$ is a root of the unity, since $\psi(0)=1$, by applying Proposition \ref{sufficient} (4), then $R_{\lambda}\psi(B)$ is hypercyclic, therefore $T$ is supercyclic.

Now, let us see that if $\phi(0)\neq 0$ and $|\lambda|<1$
then $T=R_\lambda \phi(B)$ is not supercyclic on $H^2(\mathbb{D})$.

\begin{lemma}
Assume that $\phi\in H^2(\mathbb{D})$ and $\phi(0)=1$. If $|\lambda|<1$ then the sequence $\Phi_n(z)=\phi(z)\phi(\lambda z)\cdots \phi(\lambda^{n-1}z)\in H^2(\mathbb{D})$ for all $n$ and there exists $h\in H^2(\mathbb{D})\setminus \{0\}$ such that $\|\Phi_n-h\|_2\to 0$.
\end{lemma}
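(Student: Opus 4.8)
The plan is to write $\phi = 1 + g$, where $g(z) = \sum_{k\geq 1} c_k z^k \in H^2(\mathbb{D})$ with $g(0) = 0$, and to exploit the fact that although $\phi$ itself is only in $H^2(\mathbb{D})$, each dilate $z \mapsto \phi(\lambda^j z)$ lies in $H^\infty(\mathbb{D})$ with $\norm{\phi(\lambda^j\cdot) - 1}_\infty$ decaying geometrically in $j$. Concretely, for $\abs{z} \leq 1$ and $j \geq 1$, the Cauchy--Schwarz inequality gives
\[
\abs{\phi(\lambda^j z) - 1} \;\leq\; \sum_{k\geq 1}\abs{c_k}\,\abs{\lambda}^{jk} \;\leq\; \norm{g}_2\left(\sum_{k\geq 1}\abs{\lambda}^{2jk}\right)^{1/2} \;\leq\; C\,\abs{\lambda}^{j},
\]
with $C = \norm{g}_2/\sqrt{1-\abs{\lambda}^2}$. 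Hence $\norm{\phi(\lambda^j\cdot)}_\infty \leq 1 + C\abs{\lambda}^j$, and consequently $\Phi_n = \phi\cdot\prod_{j=1}^{n-1}\phi(\lambda^j\cdot)$ is a product of one $H^2$ factor with finitely many $H^\infty$ factors, so $\Phi_n \in H^2(\mathbb{D})$; moreover
\[
\norm{\Phi_n}_2 \;\leq\; \norm{\phi}_2\prod_{j=1}^{n-1}\norm{\phi(\lambda^j\cdot)}_\infty \;\leq\; \norm{\phi}_2\prod_{j=1}^{\infty}\bigl(1 + C\abs{\lambda}^j\bigr) \;=:\; M \;<\; \infty,
\]
the infinite product converging because $\sum_{j\geq 1} C\abs{\lambda}^j < \infty$.

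Next I would telescope. From $\Phi_{n+1} = \Phi_n\cdot\phi(\lambda^n\cdot)$ we get $\Phi_{n+1} - \Phi_n = \Phi_n\cdot\bigl(\phi(\lambda^n\cdot) - 1\bigr)$, hence $\norm{\Phi_{n+1} - \Phi_n}_2 \leq \norm{\Phi_n}_2\,\norm{\phi(\lambda^n\cdot) - 1}_\infty \leq M C\abs{\lambda}^n$. Since $\sum_n M C\abs{\lambda}^n < \infty$, the sequence $(\Phi_n)$ is Cauchy in $H^2(\mathbb{D})$ and therefore converges in norm to some $h \in H^2(\mathbb{D})$.

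It remains to check $h \neq 0$. Convergence in the $H^2$ norm forces pointwise convergence on $\mathbb{D}$, since $\abs{f(z)} = \abs{\langle f, k_z\rangle} \leq \norm{f}_2/\sqrt{1-\abs{z}^2}$; as $\Phi_n(0) = \phi(0)^n = 1$ for every $n$, this yields $h(0) = \lim_n \Phi_n(0) = 1$, so $h \in H^2(\mathbb{D})\setminus\{0\}$. The only step that genuinely needs care is the opening estimate: the Taylor coefficients of an $H^2$ function need not be summable, so $\norm{\phi(\lambda^j\cdot) - 1}_\infty$ cannot be controlled termwise; pairing $(c_k)_k$ against the geometric sequence $(\abs{\lambda}^{jk})_k$ via Cauchy--Schwarz is exactly what turns the available $\ell^2$ data into the decay $C\abs{\lambda}^j$, which then simultaneously produces the uniform bound on $\norm{\Phi_n}_2$ and the summability of the increments $\norm{\Phi_{n+1}-\Phi_n}_2$.
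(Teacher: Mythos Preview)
Your proof is correct and follows essentially the same strategy as the paper: split off the first factor $\phi\in H^2(\mathbb{D})$, bound each remaining dilate $\phi(\lambda^j\cdot)$ in $H^\infty$ with $\norm{\phi(\lambda^j\cdot)-1}_\infty\leq C\abs{\lambda}^j$, deduce a uniform $H^2$-bound on $\Phi_n$, and then show the sequence is Cauchy in $H^2$. Your telescoping argument $\norm{\Phi_{n+1}-\Phi_n}_2\leq MC\abs{\lambda}^n$ is in fact tidier than the paper's direct estimate of $\norm{\Phi_n-\Phi_m}_2$, and your explicit use of Cauchy--Schwarz on the Taylor coefficients makes the key $H^\infty$ bound fully rigorous where the paper's corresponding chain of inequalities is somewhat informal.
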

\begin{proof}
Indeed, since $|\lambda|<1$ then
$\phi(\lambda z)\cdots \phi(\lambda^{n-1}z)\in H^\infty(\mathbb{D})$ for all $n\geq 1$
$$
\|\Phi_n(z)\|_2=\|M_{\phi(\lambda z)\cdots \phi(\lambda^{n-1}z)}\phi\|_2 \leq \|\phi(\lambda z)\cdots \phi(\lambda^{n-1}z)\|_\infty \|\phi\|_2.
$$
Hence, $\Phi_n\in H^2(\mathbb{D})$ for all $n\geq 1$. 
Now let us see that the infinite product $\prod_{n\geq 1}\phi(\lambda^{n-1}z)$ is convergent uniformly on compact subsets to some $h\in H(\mathbb{D})$.
Since $\Phi_n(0)=1$ for all $n$, then $h\neq 0$.

Indeed, set $\phi(z)=1+\phi_0$ and $C=\|\phi_0\|_2$:
\begin{eqnarray*}
|\phi( z)\cdots \phi(\lambda^{n-1}z)|
&\leq &\frac{\|\Phi_n\|_2}{\sqrt{1-|z|^2}}\\
&\leq &\frac{\|\phi(\lambda z)\cdots \phi(\lambda^{n-1}z)\|_\infty \|\phi\|_2}{\sqrt{1-|z|^2}}\\
&\leq & \frac{\prod_{n\geq 1} \|\phi(\lambda^{n-1}z)\|_{2}}{{\sqrt{1-|z|^2}}}\\
&\leq &\frac{\prod_{n\geq 1} (1+|\lambda|^{n-1}C)}{{\sqrt{1-|z|^2}}},
\end{eqnarray*}
Since $|\lambda|<1$, $\sum_{n\geq 1}C|\lambda|^{n-1}$ is convergent, which gives that the infinite product  $\Phi_n(z)$ converges uniformly on compact subsets to some $h\in H(\mathbb{D})$.  Finally, a similar computation yields that $\Phi_n(z)\in H^2(\mathbb{D})$ is a Cauchy sequence. Indeed, if $n>m$ then
\begin{eqnarray*}
\|\Phi_n(z)-\Phi_m(z)\|_2 &=& \left\|\prod_{k=1}^{n} \phi(\lambda^{k-1}z)-\prod_{k=1}^{m}\phi(\lambda^{k-1}z)\right\|_2 \\
&\leq & \left\|\prod_{k=2}^{m}\phi(\lambda^{k-1}z)\right\|_\infty \|\phi\|_2 \left\|\prod_{k=m+1}^{n} \phi(\lambda^{k-1}z)-1\right\|_\infty
\end{eqnarray*}
The results follows if we show that $h_m(z)=\prod_{n=m+1}^\infty \phi(\lambda^{k-1}z) $ converges uniformly to $1$. Since $h_m(z)$ is the remainder of a convergent infinite product, we get that $h_{m-1}(z)\to 1$ uniformly on compact subsets of $\mathbb{D}$, hence
$$
\sup_{|z|=|\lambda|} |h_{m-1}(z)|=\sup_{|z|=1} |h_m(z)|\to 1
$$
as we wanted.  Therefore $\|\Phi_n-h\|_2\to 0$ as $n\to \infty$ as we desired.

\end{proof}

\begin{theorem}
Assume that $|\lambda|<1$ and  $T=R_\lambda\phi(B)$  is an extended $\lambda$-eigenoperator of $B$.  If $\phi(0)\neq 0$ then $T$ is not supercyclic.
\end{theorem}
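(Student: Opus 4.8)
The plan is to pass to the adjoint and produce two linearly independent eigenvectors of $T^\star$, which is impossible for a supercyclic operator. Since supercyclicity is unchanged under multiplication by a nonzero scalar, I would first replace $\phi$ by $\psi:=\phi/\phi(0)$, so that $\psi\in H^2(\mathbb D)$, $\psi(0)=1$, and $S:=R_\lambda\psi(B)=\phi(0)^{-1}T$; it then suffices to show that $S$ is not supercyclic. Taking adjoints exactly as in the factorization theorem, $S^\star=M_{\overline\psi}R_{\overline\lambda}$, a bounded operator on $H^2(\mathbb D)$ acting by $(S^\star f)(z)=\overline\psi(z)\,f(\overline\lambda z)$, where $\overline\psi(z)=\overline{\psi(\bar z)}$.

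Next I would invoke the preceding Lemma: since $\psi(0)=1$, $|\lambda|<1$ and $\psi\in H^2(\mathbb D)$, the products $\prod_{k=0}^{n-1}\psi(\lambda^k z)$ converge in $H^2(\mathbb D)$, and uniformly on compacta, to a function $h\in H^2(\mathbb D)\setminus\{0\}$ with $h(0)=1$; moreover $h(z)=\prod_{k\ge 0}\psi(\lambda^k z)$, so shifting the index in the product gives the functional equation $\psi(w)\,h(\lambda w)=h(w)$ on $\mathbb D$. Replacing $w$ by $\bar z$ and conjugating yields $\overline\psi(z)\,\overline h(\overline\lambda z)=\overline h(z)$ on $\mathbb D$, where $\overline h(z)=\overline{h(\bar z)}$. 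Since $h\in H^2(\mathbb D)$, both $\overline h$ and $z\overline h$ lie in $H^2(\mathbb D)$ and are nonzero. The last displayed identity says precisely that $S^\star\overline h=\overline h$, and multiplying that identity by $\overline\lambda z$ gives $(S^\star(z\overline h))(z)=\overline\lambda z\,\overline\psi(z)\overline h(\overline\lambda z)=\overline\lambda\,(z\overline h)(z)$, that is, $S^\star(z\overline h)=\overline\lambda\,(z\overline h)$. Because $\overline h(0)=1\ne 0=(z\overline h)(0)$, the vectors $\overline h$ and $z\overline h$ are linearly independent, so $\{1,\overline\lambda\}\subseteq\sigma_p(S^\star)$; these two eigenvalues are distinct since $|\lambda|<1$ forces $\lambda\ne 1$.

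Finally I would appeal to the classical Hilden--Wallen fact that the adjoint of a supercyclic operator has at most one eigenvalue: if $S$ were supercyclic with supercyclic vector $f$ and $S^\star$ had linearly independent eigenvectors with distinct eigenvalues $\alpha,\beta$, then pairing the dense set $\{\mu S^n f:\mu\in\mathbb C,\ n\ge 0\}$ against these two eigenvectors would force $\{\mu(\alpha^n,\beta^n):\mu\in\mathbb C,\ n\ge 0\}$ to be dense in $\mathbb C^2$, which is impossible because this set is a countable union of complex lines whose slopes $(\beta/\alpha)^n$ are never dense in $\mathbb C$. Hence $S$ is not supercyclic, and therefore neither is $T=\phi(0)\,S$. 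The only genuinely delicate ingredient is the $H^2$-convergence of the products $\prod_{k=0}^{n-1}\psi(\lambda^k z)$ to a nonzero limit, but that is exactly what the preceding Lemma supplies; the rest is a short computation with the functional equation, so I do not anticipate a serious obstacle.
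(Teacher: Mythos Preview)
Your proof is correct and takes a genuinely different route from the paper's. Both arguments rely on the preceding Lemma (that $\Phi_n=\prod_{k=0}^{n-1}\psi(\lambda^kz)$ converges in $H^2(\mathbb D)$ to a nonzero $h$ with $h(0)=1$), but they exploit it differently.

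The paper argues by contradiction through the embedding $H^2(\mathbb D)\hookrightarrow H(\mathbb D)$: it fixes the target $f_0(z)=1/(1-z)\in H(\mathbb D)$, uses the intertwining relation $B^mT^n=\lambda^{mn}T^nB^m$ together with the operator-norm bound $\|T^n\|\le\|T\|^n$ to show $|B^mT^nf(z)|\le |\lambda|^{mn}\|T\|^n/\sqrt{1-|z|^2}$, and uses the Lemma only to bound $|(T^nf)(0)|=|\langle f,\overline\Phi_n\rangle|\to|\langle f,\overline h\rangle|\ne 0$ from below. Choosing $m$ with $|\lambda|^m\|T\|<1$ then forces $B^mT^{n_k}f/(T^{n_k}f)(0)\to 0$, contradicting convergence to $f_0$.

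Your approach is cleaner and more structural: from the Lemma you extract the functional equation $\psi(w)h(\lambda w)=h(w)$, which immediately produces an eigenvector $\overline h$ of $S^\star=M_{\overline\psi}R_{\overline\lambda}$ with eigenvalue $1$, and then $z\overline h$ with eigenvalue $\overline\lambda\ne 1$; the standard spectral obstruction to supercyclicity finishes the job. In fact your argument yields more: $z^n\overline h$ is an eigenvector for $\overline\lambda^{\,n}$, so $\sigma_p(T^\star)$ contains the whole sequence $\{\phi(0)\,\overline\lambda^{\,n}:n\ge 0\}$. The trade-off is that the paper's route makes the dynamical failure visible (a concrete function the projective orbit cannot approach), whereas yours converts the problem into a two-line spectral computation once the infinite product is in hand. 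A minor remark: the result you invoke (that $\sigma_p(T^\star)$ has at most one point when $T$ is supercyclic) is usually attributed to Herrero rather than Hilden--Wallen, but your sketch of its proof is correct, and in the present case $|\overline\lambda|<1$ makes the ``slopes'' $\overline\lambda^{\,n}$ converge, so density in $\mathbb C$ fails trivially.
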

\begin{proof}
Indeed, let us denote by 
 $H(\mathbb{D})$ the space of analytic funcions in the unit disk provided with the uniform convergence on compact subsets of the unit disk.

Since $H^2(\mathbb{D})\hookrightarrow H(\mathbb{D})$ if $T$ is  hypercyclic or supercyclic on $H^2(\mathbb{D})$  then the sequence of operators $T^n: H^2(\mathbb{D})\to H(\mathbb{D})$ is hypercyclic or supercyclic on  $H(\mathbb{D})$. Let us remark that the operator $T=R_\lambda\phi(B)$ could be not continuous on $H(\mathbb{D})$, but we don't need such requeriments.

We will see that the projective orbit $\{\lambda T^n(f)\,:\,\lambda\in \mathbb{D}, n\in\mathbb{N}\}$ of some vector $f\in H^2$ is not dense in $H(\mathbb{D})$ endowed with the compact open topology.

Without loss of generality we can suppose that $a_0=\phi(0)=1$. Let turn our attention to the following function:
$$
f_0(z)=\frac{1}{1-z}=\sum_{k=0}^\infty z^k \in H(\mathbb{D}).
$$
Clearly $f_0\in H(\mathbb{D})$ and $f_0$ is an eigenfunction of $B$ associated to the eigenvalue $1$, that is, $Bf_0=f_0$.

By contradiction, suppose that there exists a sequence of complex numbers $(\lambda_n)$ and a function  $f\in H^2$ such that the sequence of functions
$(\lambda_n T^nf)$ is dense in  $H(\mathbb{D})$ endowed with the compact-open topology. 
In particular it will exists a sequence $(n_k)$ such that
$\lambda_{n_k}T^{n_k}f\to f_0$ uniformly on compact subsets of   $\mathbb{D}$.

Since  $B$ is  continuos on $H(\mathbb{D})$ and $Bf_0=f_0$ then
$$
B^m(\lambda_{n_k}T^{n_k}f)\to f_0=B^mf_0
$$
uniformly on compact subsets of $\mathbb{D}$. Hence, the quotient:
\begin{equation}
\label{a1}
\frac{\lambda_{n_k}BT^{n_k}f(z)}{\lambda_{n_k}(T^{n_k}f)(0)}=\frac{BT^{n_k}f(z)}{(T^{n_k}f)(0)}\to f_0(z)
\end{equation}
uniformly on compact subsets of $\mathbb{D}$.

We will show that there exists $m\in \mathbb{N}$
such that for any $z\in \mathbb{D}$ the quotients:
$$
\frac{B^mT^{n}f(z)}{(T^{n}f)(0)}\to 0
$$
as $n\to\infty$, obtaining a contradiction.

Indeed, since $B^mT=\lambda^m TB^m$ we get
$$
B^mT^nf=\lambda^{nm}T^nB^mf.
$$
By hypothesis  $f\in H^2(\mathbb{D})$ and $T^nB^m$ is bounded on $H^2(\mathbb{D})$, therefore 
$T^nB^mf\in H^2(\mathbb{D})$. We can suppose without loss that $\|f\|=1$. Hence, since $\|B\|=1$ then  
\begin{eqnarray}
|\lambda^{nm}T^nB^mf(z)|&\leq &\frac{\|T^nB^mf\|}{\sqrt{1-|z|^2}} \\
\label{a2}
&\leq & \frac{|\lambda|^{nm}\|T\|^n}{\sqrt{1-|z|^2}}.
\end{eqnarray}
Here $\|T\|$ denotes the uniform norm of $T$ as linear operator on the space   $H^2(\mathbb{D})$.

Let us obtain inferior estimates of the sequence  $|(T^{n}f)(0)|$. Recall that $T^n=\R_\lambda \phi(B)\cdots R_\lambda\phi(B)=R_{\lambda}^n\phi(B)\cdots \phi(\lambda^{n-1}B)$. On the other hand, since $|\lambda|<1$ the sequence $\Phi_n\to h\neq 0$ on $H^2(\mathbb{D})$. Since the set of supercyclic vectors is dense, we can suppose without loss of generality that $\langle f,h\rangle \neq 0$.

Thus:

\begin{eqnarray*}
|(T^{n}f)(0)|&=&|\langle T^{n}f,1 \rangle|  \\
&=& |\langle f, M_{\overline{\phi}(z)\cdots\overline{\phi}(\lambda^{n-1}z)} R_{\lambda^n}^\star 1\rangle| \\
&=& |\langle f,\overline{\phi}(z)\cdots\overline{\phi}(\lambda^{n-1}z)\rangle|\\
&\to&  \langle f,h \rangle\neq 0
\end{eqnarray*}
as $n\to \infty$. Using this fact, the  equations (\ref{a1}), (\ref{a2}), and by taking $m$ such that $|\lambda|^m<\|T\|$ we get:

\begin{eqnarray*}
\frac{\lambda_{n_k}BT^{n_k}f(z)}{\lambda_{n_k}(T^{n_k}f)(0)}&=&\frac{BT^{n_k}f(z)}{(T^{n_k}f)(0)}\\
&\leq & \frac{\frac{|\lambda|^{nm}\|T\|^n}{\sqrt{1-|z|^2}}}{|(T^{n}f)(0)|}\to 0
\end{eqnarray*}
as $n\to \infty$. Therefore $f$ cannot be supercyclic for $T$ as we desired to prove.
\end{proof}

\section{Concluding remarks}

The results we have obtained here especially Theorem \ref{main} and Theorem \ref{main2}, indicate
there is a {\it dichotomy  theorem} waiting to be proved
that  establishs the elements of the $\lambda$-commutant of $B$ are either hypercyclic or they have orbits quite regulars (that is, either they converge to zero or the orbits of the inverse operator converge to zero).

Our results on the  hypercyclicity problem of the element of the $\lambda$-commutant: $R_\lambda \phi(B)$  is connected with the shape of the spectrum of $\phi(B)$ and it remains a mysterious  a  characterization of hypercyclicity of $R_\lambda\phi(B)$ in terms of the geometry of the spectrum of $\phi(B)$.

In any case we see that it is particularly striking when we are dealing the $\lambda$-commutant  hypercyclicity problem for Banach space operators. We see that the existence of a uniform norm of the operator make difficult the transference of the hypercyclicity. In strong contrast, in Fréchet spaces such transference is easier (see \cite{BGSR,note}).

\noindent
{\bf Conflicts of Interest:} The authors declare no conflict of interest.

\noindent
{\bf Funding: } The second and the third author are supported by Junta de Andalucía, Consejería de Universidad, Investigación e Innovación: ProyExcel\_00780  ”Operator Theory: An interdisciplinary approach.”

\noindent
{\bf Ethical Conduct: } This article is original, it has not been previously published and it has not been simultaneously submitted for evaluation to another journal.
All authors have contributed substantially to the article without omission of any person.

\noindent
{\bf Data Availability Statements:} Not applicable.

\end{document}